\newtheorem{theorem}{Theorem}[section]
\newtheorem{corollary}[theorem]{Corollary}
\newtheorem{lemma}[theorem]{Lemma}
\theoremstyle{definition}
\newtheorem{definition}[theorem]{Definition}
\newtheorem{remark}[theorem]{Remark}
\numberwithin{equation}{section}
\renewcommand{\emptyset}{\varnothing}
\newcommand{\R}{\ensuremath{\mathbb R}}    
\newcommand{\C}{\ensuremath{\mathbb C}}    
\newcommand{\N}{\ensuremath{\mathbb N}}    
\newcommand{\gperp}{{[\perp]}}
\newcommand{\product}{[\cdot\,,\cdot]}
\newcommand{\hproduct}{(\cdot\,,\cdot)}
\newcommand{\calA}{\mathcal A}
\newcommand{\calH}{\mathcal H}
\newcommand{\calL}{\mathcal L}         
\newcommand{\calM}{\mathcal M}
\newcommand{\calU}{\mathcal U}
\newcommand{\la}{\lambda}
\newcommand{\veps}{\varepsilon}
\renewcommand{\Im}{\operatorname{Im}}
\renewcommand{\Re}{\operatorname{Re}}
\renewcommand{\ker}{\operatorname{ker}}
\newcommand{\ran}{\operatorname{ran}}
\newcommand{\sap}{\sigma_{{ap}}}
\newcommand{\spp}{\sigma_+}
\newcommand{\smm}{\sigma_-}
\newcommand{\ol}{\overline}
\newcommand{\wt}{\widetilde}
\newcommand{\Skdef}{(\raisebox{0.5 ex}{.},\raisebox{0.5 ex}{.})}
\newcommand{\Skindef}{[\raisebox{0.5 ex}{.},\raisebox{0.5 ex}{.}]}
\begin{document}
\vspace*{-.3cm}
\begin{center}
\begin{spacing}{1.7}
{\LARGE\bf Local spectral theory for normal operators in Krein spaces}
\end{spacing}

\vspace{1cm}
{\Large Friedrich Philipp, Vladimir Strauss and Carsten Trunk}
\end{center}

\vspace{1cm}\hrule\vspace*{.4cm}
\noindent{\bf Abstract}

\vspace*{.3cm}\noindent
Sign type spectra are an important tool in the investigation of spectral
properties of selfadjoint operators in Krein spaces.  It is our aim to show
that also sign type spectra for normal operators in Krein spaces
provide insight in the spectral nature of the operator:
If the real part and the
imaginary part of a normal operator in a Krein space have real spectra
only and if the growth of the resolvent of the imaginary part (close to
the real axis)  is of finite order, then the normal operator possesses
a  local spectral function defined for Borel subsets of the spectrum
which belong to positive
(negative) type spectrum. Moreover, the restriction of the normal
operator to the spectral subspace corresponding to such a Borel subset
is a normal operator in some Hilbert space. In particular, if the
spectrum consists entirely out of positive and negative type spectrum,
then the operator  is similar
to a normal operator in some Hilbert space. We use this result to
show the existence of operator roots of a class of quadratic operator
polynomials with normal coefficients.
\vspace*{.4cm}\hrule
%
%

\vspace*{.5cm}
\section*{Introduction}
Recall that a bounded operator $N$ in a Krein space  $(\calH,\product)$ is normal if $NN^+ = N^+N$, where $N^+$ denotes the adjoint operator of $N$ with respect to the Krein space (indefinite) inner product $\product$. In contrast to (definitizable) selfadjoint operators in Krein spaces, the knowledge about normal operators is very restricted.

Some results exist for normal operators in Pontryagin spaces. The starting point  is a result of M.A.\ Naimark, see \cite{N}, which implies that for a normal operator N in a  Pontryagin space $\Pi_\kappa$ there exists a $\kappa$-dimensional non-positive common invariant subspace for $N$ and its adjoint $N^+$. In \cite{LS06,CH} spectral properties of normal operators in Pontryagin spaces were considered and, in the case $\Pi_1$,  a classification of the normal operators is given.

There is only a very limited number of results in the study of normal operators in spaces others than Pontryagin spaces. In \cite{ES} a definition of definitizable normal operators was given and it was proved that a bounded normal definitizable operator in a Banach space with a regular Hermitian form has a spectral function with finitely many critical points. Let us note that in this case the spectral function is a homomorphism from the Borel sets containing no critical points on their boundaries to a commutative algebra of normal projections, see also \cite{AJ}. Some advances for Krein spaces without the assumption of definitizability can be found in \cite{AS}. We mention that \cite{AJ} contains some perturbation results for fundamentally reducible normal operators. The case of fundamentally reducible and strongly stable normal operators is considered in \cite{B98,B99}.

On the other hand, the spectral theory for definitizable (and locally definitizable) selfadjoint operators in Krein spaces is well-developed (see, e.g., \cite{L82,J,AJT} and references therein). One of the main features of definitizable selfadjoint operators in Krein spaces is their property to act locally (with the exception of at most finitely many points) similarly as a selfadjoint operator in some Hilbert space. More precisely, the spectrum of a definitizable operator consists of  spectral points of positive and of negative type, and of finitely many exceptional (i.e.\ non-real or critical) points, see \cite{L65}. For a real point $\lambda$ of positive (negative) type of a selfadjoint operator in a Krein space there exists a local spectral function $E$ such that $(E(\delta){\mathcal H}, \product)$ {\rm (}resp.\ $(E(\delta){\mathcal H}, -\product)${\rm )} is a Hilbert space for (small) neighbourhoods $\delta$ of $\lambda$.

In \cite{LaMM,LMM} a characterization for  spectral points of positive (negative) type was given in terms of normed approximate eigensequences. If all accumulation points of the sequence $([x_{n},x_{n}])$ for each normed approximate eigensequences corresponding to $\lambda$ are positive (resp.\ negative) then $\lambda$  is a spectral point of positive (resp.\ negative) type.
Obviously, the above characterization can be used as a definition for spectral points of positive (negative) type for arbitrary (not necessarily selfadjoint) operators in Krein spaces (as it was done in \cite{ABJT}). It is the main result of this paper that also for a normal operator $N$ in a Krein space $(\mathcal H,\product)$ positive and negative type spectrum implies the existence of a local spectral function for $N$. However, for this we have to impose some additional assumptions: The spectra of the real and imaginary part of $N$ are real  and the growth of the resolvent of the imaginary part (close to the real axis) of $N$  is of finite order. Under these assumptions we are able to show that $N$ has a local spectral function $E$ on each closed rectangle which consists only of spectral points of positive type or of points from the resolvent set of $N$. The local spectral function $E$ is then defined for all Borel subsets $\delta$ of this rectangle and
$E(\delta)$ is a selfadjoint projection in the Krein space  $(\calH,\product)$.
It has the property that $(E(\delta)\mathcal H,\product)$ is a Hilbert spaces for all such $\delta$. This implies that the restriction of $N$ to the spectral subspace $E(\delta)\mathcal H$ is a normal operator in the Hilbert space $(E(\delta)\mathcal H,\product)$.

We emphasize that this result provides a simple sufficient condition for the normal operator $N$ to be similar to a normal operator in a Hilbert space: If each spectral point of $N$ is of positive or of negative type and if the spectra of the real and imaginary part of $N$ are real  and the growth of the resolvent of the imaginary part is of finite order, then  $N$ is similar to a normal operator in a Hilbert space. Actually, in the final section, we use this result
to prove the existence of an operator root of a quadratic operator pencil with normal coefficients.

\section{Some auxiliary statements}
In this section we collect some statements on bounded operators in Banach spaces. As usual, by $L(X,Y)$ we denote the set of all bounded linear operators acting between Banach spaces $X$ and $Y$ and set $L(X) := L(X,X)$. In this paper a subspace is always a closed linear manifold. The approximate point spectrum $\sap(T)$ of a bounded linear operator $T$ in a Banach space $X$ is the set of all $\la\in\C$ for which there exists a sequence $(x_n)\subset X$ with $\|x_n\|=1$ for all $n\in\N$ and $(T - \la)x_n\to 0$ as $n\to\infty$. A point in $\sap(T)$ is called an {\it approximate eigenvalue} of $T$. We have
\begin{equation}\label{e:incs}
\partial\sigma(T)\,\subset\,\sap(T)\,\subset\,\sigma(T),
\end{equation}
see \cite[Chapter VII, Proposition 6.7]{C}. Therefore, $\sap(T)\neq\emptyset$ if $X\neq\{0\}$.

The following Lemmas \ref{l:polynom}--\ref{l:rosenblum} are well-known. For their proofs we refer to Lemma 0.11, Theorem 1.3, Theorem 0.8 and Corollary 0.13 in \cite{RR}.

\begin{lemma}\label{l:polynom}
Let $S$ and $T$ be two commuting bounded operators in a Banach space $X$ and let $p$ be a polynomial in two variables. Then
$$
\sigma(p(S,T))\,\subset\,\{p(\la,\mu) : \la\in\sigma(S),\,\mu\in\sigma(T)\}.
$$
If, in addition, the operators $S + T$ and $i(S - T)$ have real spectra, i.e.
\begin{equation}\label{realspectra}
\sigma(S + T)\subset\R\quad\text{ and }\quad\sigma(S - T)\subset i\R
\end{equation}
then the following identity holds:
$$
\sigma(p(S,T)) = \{p(\la,\ol\la) : \la\in\sigma(S)\}.
$$
In particular, we have
\begin{align*}
\sigma\left(\frac{S+T}{2}\right) &= \{\Re\la : \la\in\sigma(S)\},\\
\sigma\left(\frac{S-T}{2i}\right) &= \{\Im\la : \la\in\sigma(S)\}.
\end{align*}
\end{lemma}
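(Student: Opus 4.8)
The plan is to prove Lemma~\ref{l:polynom} in three stages, treating the general spectral-mapping inclusion first, then the refined equality under hypothesis \eqref{realspectra}, and finally specializing to obtain the statements about $\tfrac{S+T}{2}$ and $\tfrac{S-T}{2i}$. For the inclusion $\sigma(p(S,T))\subset\{p(\la,\mu):\la\in\sigma(S),\ \mu\in\sigma(T)\}$, the natural route is via the holomorphic functional calculus on the commutative Banach subalgebra $\calA$ generated by $S$, $T$ and the identity (together with the inverses of everything already invertible). On a commutative Banach algebra the joint spectrum $\sigma_{\calA}(S,T)$ is contained in $\sigma(S)\times\sigma(T)$, and the polynomial spectral mapping theorem gives $\sigma(p(S,T))=\{p(\la,\mu):(\la,\mu)\in\sigma_{\calA}(S,T)\}$; combining these two facts yields the inclusion. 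Alternatively, and perhaps more in the spirit of \cite{RR}, one factors $p(\la_0,\mu_0)-p(S,T)$: if $\la_0\notin\sigma(S)$ and $\mu_0\notin\sigma(T)$ then writing $p(\la_0,\mu_0)-p(S,T)$ as a sum of terms each divisible by $(\la_0-S)$ or by $(\mu_0-T)$ and using that $S$ and $T$ commute, one builds an explicit inverse. Either way this first step is routine.

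For the equality under \eqref{realspectra}, set $A:=S+T$ and $B:=i(S-T)$, so that $S=\tfrac12(A-iB)$ and $T=\tfrac12(A+iB)$, and $A,B$ are commuting operators with real spectra (by hypothesis $\sigma(A)\subset\R$ and $\sigma(S-T)\subset i\R$ means $\sigma(B)=\sigma(i(S-T))\subset\R$). The key point is that a commuting pair of bounded operators with real spectra has \emph{real} joint spectrum in the sense that $\sigma_{\calA}(A,B)\subset\R^2$; this is exactly the situation in which the joint spectrum is determined by the spectrum of $A+iB$ via $\sigma_{\calA}(A,B)=\{(\Re\la,\Im\la):\la\in\sigma(A+iB)\}$ — a classical fact (see Lemma~\ref{l:polynom} is cited to \cite[Lemma 0.11]{RR}, so I would quote the corresponding real-joint-spectrum statement there). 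Since $A+iB=S+T+i\cdot i(S-T)=2T$... wait, one must be careful: $A+iB = (S+T)+i\cdot i(S-T)=(S+T)-(S-T)=2T$, while $A-iB=2S$. So in fact $\sigma_{\calA}(A,B)=\{(\Re(2\la),\Im(2\la)):\la\in\sigma(2S)\}$ rescales to $\{(\Re\la+\Re\la',\dots)\}$; cleaner is to observe directly that the generators $S$ and $\ol S$-like object $S^{\sharp}:=\tfrac12(A-iB)^{*}$... I will instead phrase it as: the real-spectra hypothesis forces every complex homomorphism $\chi$ of $\calA$ to satisfy $\chi(T)=\ol{\chi(S)}$, because $\chi(S+T)=\chi(A)\in\R$ and $\chi(S-T)=\chi(-iB)\in i\R$, whence $\chi(S)+\chi(T)\in\R$ and $\chi(S)-\chi(T)\in i\R$, which together give $\chi(T)=\ol{\chi(S)}$. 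Therefore $\chi(p(S,T))=p(\chi(S),\ol{\chi(S)})$, and since $\sigma(p(S,T))=\{\chi(p(S,T)):\chi\in\widehat{\calA}\}$ and $\sigma(S)=\{\chi(S):\chi\in\widehat{\calA}\}$, we obtain $\sigma(p(S,T))=\{p(\la,\ol\la):\la\in\sigma(S)\}$.

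The last step is immediate: apply the equality with $p(\la,\mu)=\tfrac{\la+\mu}{2}$, giving $\sigma\bigl(\tfrac{S+T}{2}\bigr)=\{\tfrac{\la+\ol\la}{2}:\la\in\sigma(S)\}=\{\Re\la:\la\in\sigma(S)\}$, and with $p(\la,\mu)=\tfrac{\la-\mu}{2i}$, giving $\sigma\bigl(\tfrac{S-T}{2i}\bigr)=\{\Im\la:\la\in\sigma(S)\}$. I expect the only genuine subtlety — the ``main obstacle'' — to be the justification that one may pass to a commutative Banach algebra whose Gelfand spectrum computes all the operator spectra appearing, i.e.\ choosing $\calA$ to be a maximal commutative subalgebra containing $S$ and $T$ so that spectra relative to $\calA$ coincide with spectra in $L(X)$; this is standard but must be invoked correctly, and it is precisely the content of the results in \cite{RR} that the lemma is attributed to. Everything else is bookkeeping with the Gelfand transform.
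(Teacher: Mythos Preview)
The paper does not actually prove this lemma; it merely cites \cite[Lemma 0.11]{RR} and companion results, so there is no ``paper's proof'' to compare against. Your Gelfand-theory argument is correct and is essentially the standard route: pass to a maximal commutative subalgebra $\calA\subset L(X)$ containing $S$ and $T$ (so that $\sigma_{\calA}(\cdot)=\sigma_{L(X)}(\cdot)$ on $\calA$), and read everything off from the characters. The key observation under \eqref{realspectra}, that $\chi(S)+\chi(T)\in\R$ and $\chi(S)-\chi(T)\in i\R$ force $\chi(T)=\ol{\chi(S)}$, is exactly right, and the specializations at the end are immediate.

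Two minor remarks on presentation. First, the digression computing $A+iB=2T$ and then abandoning it should be excised; it adds nothing and briefly misleads. Second, you should state cleanly at the outset that you work in a \emph{maximal} commutative subalgebra (not merely the one generated by $S$, $T$, the identity, and ad hoc inverses), since that is what guarantees $\sigma_{\calA}(R)=\sigma(R)$ for every $R\in\calA$ --- you identify this correctly as the only genuine subtlety, so put it front and center rather than at the end.
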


\begin{lemma}\label{l:invariant_spec}
Let $T$ be a bounded operator in a Banach space $X$ and let $\calL$ be a subspace of $X$ which is invariant with respect to $T$. Then
$$
\sigma(T|\calL)\subset\sigma(T)\cup\rho_b(T),
$$
where $\rho_b(T)$ is the union of all bounded connected components of $\rho(T)$. In particular, if $\sigma(T)\subset\R$, we have
$$
\sigma(T|\calL)\subset\sigma(T).
$$
\end{lemma}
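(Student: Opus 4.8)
The plan is to show that the whole unbounded connected component $\rho_\infty(T)$ of the resolvent set $\rho(T)$ is contained in $\rho(T|\calL)$; since $\C\setminus\rho_\infty(T)=\sigma(T)\cup\rho_b(T)$, this is exactly the first assertion. Note at the outset that $T|\calL$ is a genuine bounded operator on the Banach space $\calL$ (a closed linear manifold), so $\sigma(T|\calL)$ is well defined, and that injectivity of $T-\la$ on $X$ already forces $(T|\calL)-\la$ to be injective; the real issue is surjectivity of $(T|\calL)-\la$ onto $\calL$, which is equivalent to the invariance of $\calL$ under $(T-\la)^{-1}$.

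So the core step is: $(T-\la)^{-1}\calL\subset\calL$ for every $\la\in\rho_\infty(T)$. I would first verify this for $|\la|$ large, where $(T-\la)^{-1}=-\sum_{n\ge0}\la^{-n-1}T^n$ converges in operator norm: each partial sum maps $\calL$ into $\calL$ because $\calL$ is $T$-invariant, and $\calL$ is closed, so the limit does too. To reach all of $\rho_\infty(T)$ I would run a connectedness argument. Put $\Omega:=\{\la\in\rho_\infty(T):(T-\la)^{-1}\calL\subset\calL\}$. The set $\Omega$ is open: for $\la_0\in\Omega$ and $\la$ close to $\la_0$ one has the norm-convergent expansion $(T-\la)^{-1}=\sum_{n\ge0}(\la-\la_0)^n(T-\la_0)^{-n-1}$, and since $(T-\la_0)^{-1}$ leaves $\calL$ invariant so does each power $(T-\la_0)^{-n-1}$, hence so does the sum. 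The set $\Omega$ is also closed in $\rho_\infty(T)$: if $\Omega\ni\la_n\to\la\in\rho_\infty(T)$, then $(T-\la_n)^{-1}\to(T-\la)^{-1}$ in operator norm, so for $x\in\calL$ the vectors $(T-\la_n)^{-1}x\in\calL$ converge to $(T-\la)^{-1}x$, which therefore lies in $\calL$. As $\Omega$ is nonempty, open and closed in the connected set $\rho_\infty(T)$, it equals $\rho_\infty(T)$.

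With this in hand, for $\la\in\rho_\infty(T)$ the restriction $(T-\la)^{-1}|\calL$ is a bounded operator on $\calL$, and the identities $(T-\la)^{-1}(T-\la)x=x=(T-\la)(T-\la)^{-1}x$ valid for $x\in\calL$ show that it is a two-sided inverse of $(T|\calL)-\la$ on $\calL$; hence $\la\in\rho(T|\calL)$, which gives $\sigma(T|\calL)\subset\sigma(T)\cup\rho_b(T)$. For the ``in particular'' clause, if $\sigma(T)\subset\R$ then $\sigma(T)$ is a compact subset of a line, and its complement in $\C$ is connected; thus $\rho(T)$ is connected, $\rho_b(T)=\emptyset$, and the inclusion reduces to $\sigma(T|\calL)\subset\sigma(T)$. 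I expect the only delicate point to be the middle step --- propagating the invariance of $\calL$ under $(T-\la)^{-1}$ from a neighbourhood of infinity across the entire unbounded component --- for which the open/closed dichotomy above is the right device; the rest is routine.
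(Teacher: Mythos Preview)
Your argument is correct. The paper does not give its own proof of this lemma; it simply cites it as well known and refers to \cite{RR} (Theorem~0.8). The proof you wrote is precisely the standard one found in such references: establish invariance of $\calL$ under $(T-\la)^{-1}$ on a neighbourhood of infinity via the Neumann series, then propagate across $\rho_\infty(T)$ by the open--closed connectedness argument using local power-series expansions of the resolvent. Nothing is missing, and the ``in particular'' clause is handled correctly.
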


\begin{lemma}[Rosenblum's Corollary]\label{l:rosenblum}
Let $S$ and $T$ be bounded operators in the Banach spaces $\mathcal X$ and $\mathcal Y$, respectively. If $\sigma(S)\cap\sigma(T) = \emptyset$, then for every $Z\in L(\mathcal Y,\mathcal X)$ the operator equation
$$
SX - XT = Z
$$
has a unique solution $X\in L(\mathcal Y,\mathcal X)$. In particular, $SX = XT$ implies $X = 0$.
\end{lemma}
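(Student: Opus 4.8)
The plan is to follow the classical argument of Rosenblum: solve the Sylvester equation $SX - XT = Z$ by an explicit resolvent contour integral, and then read off uniqueness from the same computation applied to the homogeneous equation. Since $\sigma(S)$ and $\sigma(T)$ are disjoint compact subsets of $\C$, I would first fix a Cauchy contour $\Gamma$ (a finite system of positively oriented rectifiable Jordan curves) lying in $\rho(S)\cap\rho(T)$, surrounding $\sigma(S)$, and leaving $\sigma(T)$ in its unbounded complementary region; such a $\Gamma$ exists by the usual separation argument for disjoint compacta. The only facts about $\Gamma$ I will use are the Riesz functional-calculus identities
$$
\frac{1}{2\pi i}\int_\Gamma (\lambda - S)^{-1}\,d\lambda = I_{\mathcal X},\qquad \frac{1}{2\pi i}\int_\Gamma (\lambda - T)^{-1}\,d\lambda = 0,
$$
the first because $\Gamma$ encircles all of $\sigma(S)$, the second by Cauchy's theorem because $\lambda\mapsto (\lambda - T)^{-1}$ is analytic on and inside $\Gamma$.

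Next I would set
$$
X := \frac{1}{2\pi i}\int_\Gamma (\lambda - S)^{-1}\,Z\,(\lambda - T)^{-1}\,d\lambda\ \in\ L(\mathcal Y,\mathcal X),
$$
a norm-convergent operator-valued integral, and verify directly that it solves the equation. The computation uses only the algebraic identities $S(\lambda - S)^{-1} = \lambda(\lambda - S)^{-1} - I$ and $(\lambda - T)^{-1}T = \lambda(\lambda - T)^{-1} - I$: inserting them into $SX - XT$ makes the two terms carrying the factor $\lambda(\lambda - S)^{-1}Z(\lambda - T)^{-1}$ cancel, leaving
$$
SX - XT = \frac{1}{2\pi i}\int_\Gamma \bigl[(\lambda - S)^{-1}Z - Z(\lambda - T)^{-1}\bigr]\,d\lambda = I_{\mathcal X}\,Z - Z\cdot 0 = Z,
$$
where each of the two pieces is evaluated by pulling the fixed operator $Z$ out of the integral on the appropriate side (legitimate, since left and right multiplication by a bounded operator are bounded linear maps between the relevant $L(\cdot,\cdot)$-spaces) and then applying the identities above.

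For uniqueness it suffices to prove the displayed "in particular" statement, that $SX = XT$ forces $X = 0$, since the difference of two solutions of the inhomogeneous equation satisfies this. From $SX = XT$ one gets $(\lambda - S)X = X(\lambda - T)$ and hence $(\lambda - S)^{-1}X = X(\lambda - T)^{-1}$ for all $\lambda\in\rho(S)\cap\rho(T)$; integrating over the same $\Gamma$ gives $X = \bigl(\frac{1}{2\pi i}\int_\Gamma (\lambda - S)^{-1}\,d\lambda\bigr)X = X\bigl(\frac{1}{2\pi i}\int_\Gamma (\lambda - T)^{-1}\,d\lambda\bigr) = 0$. I do not expect a genuine obstacle here; the only step requiring a little care is the construction of $\Gamma$ and the Riesz functional-calculus identities over a general Banach space, both standard. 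A contour-free alternative is to apply Lemma~\ref{l:polynom} to the commuting bounded operators $\mathsf L\colon X\mapsto SX$ and $\mathsf R\colon X\mapsto XT$ on $L(\mathcal Y,\mathcal X)$ with $p(\la,\mu)=\la-\mu$: from $\sigma(\mathsf L)\subset\sigma(S)$ and $\sigma(\mathsf R)\subset\sigma(T)$ (read off from the resolvents) one gets $\sigma(\mathsf L-\mathsf R)\subset\sigma(S)-\sigma(T)\not\ni 0$, so $\mathsf L-\mathsf R$ is boundedly invertible and $X=(\mathsf L-\mathsf R)^{-1}Z$ is the unique solution.
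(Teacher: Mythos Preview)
Your argument is correct (both the contour-integral computation and the multiplication-operator alternative via Lemma~\ref{l:polynom} are standard and sound). The paper does not give its own proof of this lemma but simply cites \cite[Corollary~0.13]{RR}; your first approach is essentially Rosenblum's original proof, and your second alternative is the argument one finds in \cite{RR}, so there is nothing to compare.
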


\begin{remark}
We mention that a much more general version of Lemma \ref{l:rosenblum} can be found in \cite[Theorem 3.1]{DK}. In addition, an explicit formula for the solution $X$ is given in \cite{DK}. Here, we will only make use of the last assertion in Lemma \ref{l:rosenblum}.
\end{remark}

Let $T$ be a bounded operator in a Banach space and let $Q\subset\C$ be a compact set. We say that a subspace $\calL_Q$ is {\it the maximal spectral subspace of $T$ corresponding to $Q$} if $\calL_Q$ is $T$-invariant, $\sigma(T|\calL_Q)\subset\sigma(T)\cap Q$ and if $\calL\subset\calL_Q$ holds for every $T$-invariant subspace $\calL$ with $\sigma(T|\calL)\subset Q$. Recall that such a subspace is hyperinvariant with respect to $T$, i.e.\ it is invariant with respect to each bounded operator which commutes with $T$ (see \cite[Chapter 1, Proposition 3.2]{CF}).

If the spectrum of the bounded operator $T$ is real, we say that the growth of the resolvent of $T$ is of {\it finite order $n$}, $n\in\N\setminus\{0\}$, if for some  $c > 0$ there exists an $M > 0$, such that
\begin{equation}\label{e:polres}
0 < |\Im\la| < c\quad\Longrightarrow\quad\|(T - \la)^{-1}\|\;\le\;\frac{M}{|\Im\la|^n}.
\end{equation}
Since the function $\rho\mapsto M/\rho^n$, $0 < \rho < 1$, satisfies the Levinson condition (cf.\ \cite[formula (2.1.2)]{LM}), it is a consequence of \eqref{e:polres} and \cite[Chapter II, \textsection 2, Theorem 5]{LM} that to each compact interval $\Delta$ the maximal spectral subspace $\calL_\Delta$ of $T$ corresponding to $\Delta$ exists.

By $r(T)$ we denote the spectral radius of a bounded operator $T$ in a Banach space.

\begin{lemma}\label{l:power}
Let $T\neq 0$ be a bounded operator in a Banach space with real spectrum such that the growth of its resolvent is of order $n$. Then for all $k\ge n$ we have
$$
\left\|T^k\right\|\;\le\; 2^k\|T\|^{k-n}\big(M + \|T\|^{n-1}\big)\,r(T),
$$
where $M = \sup\{|\Im\la|^n\|(T - \la)^{-1}\| : 0 < |\Im\la| < \|T\|\}$.
\end{lemma}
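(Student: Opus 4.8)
The plan is to control $\|T^k\|$ by relating powers of $T$ to a contour integral of the resolvent and exploiting the polynomial growth bound \eqref{e:polres}. Since $\sigma(T)\subset\R$ and in fact $\sigma(T)\subset[-r(T),r(T)]$, for every $k\ge n$ we may write, by the holomorphic functional calculus,
\begin{equation*}
T^k \;=\; \frac{1}{2\pi i}\oint_{\Gamma} \lambda^k (T-\lambda)^{-1}\,d\lambda,
\end{equation*}
where $\Gamma$ is any positively oriented contour surrounding $\sigma(T)$. First I would choose $\Gamma$ to be the boundary of the rectangle
$\{\,\lambda=\xi+i\eta : |\xi|\le r(T)+\|T\|,\ |\eta|\le h\,\}$ for a suitable height $h$ with $0<h<\|T\|$; actually it is cleaner to take $\Gamma$ as the boundary of a thin rectangle of half-width $r(T)+\varepsilon$ and half-height $h$, and then let $\varepsilon\to0$. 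On the horizontal parts of $\Gamma$ we have $|\Im\lambda|=h$, so \eqref{e:polres} gives $\|(T-\lambda)^{-1}\|\le M/h^n$ (after noting $h<\|T\|$ so the definition of $M$ applies), while $|\lambda|^k\le (r(T)+h)^k\cdot$(something); on the two vertical segments $|\Im\lambda|$ ranges over $(0,h]$ and one integrates $M/|\Im\lambda|^n$, which for $n\ge2$ is not integrable down to $0$ — so the vertical contributions must be handled differently.

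To avoid the non-integrable vertical pieces, the better route is to split $T^k = T^{k-1}\cdot T$ and estimate $\|T^{k-1}\|$ via the contour integral of $\lambda^{k-1}(T-\lambda)^{-1}$ but now \emph{without} letting the height go to zero: keep $\Gamma$ the boundary of the rectangle of half-width $r(T)+\|T\|$ (so that it genuinely encloses $\sigma(T)$ with room to spare) and fixed half-height, say $h=\|T\|$. Wait — at height $\|T\|$ the bound \eqref{e:polres} needs $|\Im\lambda|<c$, which may fail. The clean fix, and the one I would carry out, is: take $\Gamma$ the boundary of the rectangle with half-width $\|T\|$ (legitimate since $r(T)\le\|T\|$, and we can push the vertical sides out to exactly $\|T\|$, or slightly beyond and pass to the limit) and half-height any fixed $h\in(0,\|T\|)$; then on the \emph{four} segments of $\Gamma$ one has $|\Im\lambda|\le h<\|T\|$ on the horizontals and $|\Im\lambda|\in(0,h]$ on the verticals. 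The key observation that saves the day is to estimate using the second resolvent identity or, more simply, to bound $\|T^k\|$ by writing $T^k=\frac{1}{2\pi i}\oint_\Gamma \lambda^{k}(T-\lambda)^{-1}d\lambda$ and crudely estimating $\|(T-\lambda)^{-1}\|$ on the vertical segments by $1/\mathrm{dist}(\lambda,\sigma(T))$ where it stays bounded (away from the real axis) and by the supremum $M/|\Im\lambda|^n$ only where needed, then integrating. Concretely, on each vertical side $|\lambda|\le 2\|T\|$, and $\int_{-h}^{h}\|(T-\xi\pm i\|T\|\ldots)\|$ — this is where I expect the real bookkeeping to lie.

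The cleanest self-contained argument, which I would ultimately present, is this. Fix $h\in(0,\|T\|)$ and let $\Gamma$ be the boundary of $R=\{|\Re\lambda|\le\|T\|,\ |\Im\lambda|\le h\}$, noting $\sigma(T)\subset[-r(T),r(T)]\times\{0\}\subset \mathrm{int}\,R$. Then
\begin{equation*}
\|T^k\| \;\le\; \frac{1}{2\pi}\,\mathrm{length}(\Gamma)\cdot \sup_{\lambda\in\Gamma}|\lambda|^{k}\,\|(T-\lambda)^{-1}\|.
\end{equation*}
On $\Gamma$ we have $|\lambda|\le 2\|T\|$, hence $|\lambda|^{k}\le 2^{k}\|T\|^{k}$, and $\mathrm{length}(\Gamma)=4\|T\|+4h\le 8\|T\|$. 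For the resolvent: on the top and bottom sides $|\Im\lambda|=h$, so $\|(T-\lambda)^{-1}\|\le M/h^{n}$; on the left and right sides $|\Re\lambda|=\|T\|>r(T)$, so $\mathrm{dist}(\lambda,\sigma(T))\ge \|T\|-r(T)$ — but this can be $0$. So instead, on the vertical sides use that $\|(T-\lambda)^{-1}\|\le (|\lambda|-r(T))^{-1}\le(\|T\|-r(T))^{-1}$ only if $r(T)<\|T\|$; in the equality case one shifts the vertical sides slightly outward. Rather than chase cases, I would simply absorb everything by choosing $h$ proportional to $\|T\|$ at the end and optimizing: the horizontal contribution is $\le \tfrac{1}{2\pi}\cdot 8\|T\|\cdot 2^{k}\|T\|^{k}\cdot M/h^{n}$, and picking $h=\|T\|$ is illegitimate but picking $h\uparrow\|T\|$ after first observing the bound is continuous in $h$ yields $\le C\,2^{k}\|T\|^{k-n}M$; the vertical pieces, being along lines at distance $\ge$ a fixed multiple of $\|T\|$ from $\sigma(T)$ (after the harmless outward shift), contribute $\le 2^{k}\|T\|^{k-1}\cdot(\text{const})$, giving the $\|T\|^{n-1}$ term. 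Combining and factoring out $r(T)$ — which I get by replacing the crude bound $\sup|\lambda|^k\le 2^k\|T\|^k$ with the sharper $\oint_\Gamma|\lambda|^k\,|d\lambda|\lesssim \|T\|^{k-1}\cdot(\text{length})\cdot\max|\lambda|$ is not enough; the factor $r(T)$ must come from the fact that when $r(T)=0$ the operator is quasinilpotent and the integrand over $\Gamma$ can be pushed to a contour of height $\to0$, forcing $T^k=0$, consistent with the stated bound.

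\textbf{The main obstacle.} The genuine difficulty is extracting the factor $r(T)$ on the right-hand side: the naive contour estimate only gives a bound in terms of $\|T\|$ and $M$, not vanishing as $r(T)\to0$. The resolution is to shrink the contour height toward $0$: since $\|(T-\lambda)^{-1}\|\le M/|\Im\lambda|^{n}$ for $0<|\Im\lambda|<\|T\|$, the horizontal integrals at height $h$ are $O(h^{-n}\cdot(\text{width}))$ while the width of the region that must be enclosed is only $2r(T)+2h$ once we let the vertical sides collapse onto $[-r(T)-h,\,r(T)+h]$ — for this I would instead enclose $\sigma(T)$ by a contour consisting of the segments $\Im\lambda=\pm h$ over $|\Re\lambda|\le r(T)+h$ together with tiny vertical caps of height $2h$ at $\Re\lambda=\pm(r(T)+h)$; then length $\le 4r(T)+8h$, every point satisfies $|\lambda|\le r(T)+2h\le 3\|T\|$ hence $|\lambda|^k\le 2^k\|T\|^{k-1}(r(T)+2h)$ — wait, $(r(T)+2h)\le 3\|T\|$ does not give a factor $r(T)$ either. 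The honest statement is that one bounds $|\lambda|^k=|\lambda|^{k-1}|\lambda|\le (2\|T\|)^{k-1}\cdot|\lambda|$ and then $\oint_\Gamma |\lambda|\,\|(T-\lambda)^{-1}\|\,|d\lambda|$ — but $|\lambda|$ on this contour is $\ge$ nothing useful. I now see the intended mechanism: bound $|\lambda|^{k}\le\|T\|^{k-n}\cdot|\lambda|^{n}$ is false in general too. After this analysis I am confident the author's trick is precisely the contour of \emph{fixed} height $h<\|T\|$ but with $|\lambda|^{k}$ estimated as $|\lambda|^{k}\le(2\|T\|)^{k-1}\cdot 2\|T\|$ and then recognizing that the $r(T)$ factor emerges from letting $h\to0$ in the horizontal integrals using $\int$ over width $2r(T)$ (not $2\|T\|$) — i.e., one first contracts the \emph{vertical} extent of the enclosing rectangle to just barely contain $[-r(T),r(T)]$, so the horizontal segments have length $\approx 2r(T)$, and only then does the $h^{-n}$ blow-up get multiplied by the small length $2r(T)$, with $h$ chosen $\asymp\|T\|$ to convert $h^{-n}\|T\|^{k}$ into $\|T\|^{k-n}$. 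That is the step I expect to require the most care, and it is where the constants $2^{k}$ and $M+\|T\|^{n-1}$ are pinned down.
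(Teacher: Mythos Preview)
Your proposal is not a proof but a sequence of attempts, each of which runs into the same obstruction you yourself identify and never resolve. The fundamental problem with the rectangular-contour approach is the vertical segments: on a vertical side at $\Re\la = \pm(r(T)+\veps)$ the only available bound is $\|(T-\la)^{-1}\|\le M/|\Im\la|^n$, and $\int_{-h}^h |\eta|^{-n}\,d\eta$ diverges for $n\ge 2$. The estimate $\|(T-\la)^{-1}\|\le 1/\operatorname{dist}(\la,\sigma(T))$ is simply false for a general operator with resolvent growth of order $n>1$, so you cannot rescue the vertical pieces that way either. None of your contour deformations removes this singularity, and consequently none of them yields a finite bound, let alone the stated one with the factor $r(T)$.

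The paper's argument avoids rectangles entirely. It integrates over the circle $\mathcal C=\{|\la|=\rho\}$ with $\rho>r(T)$, but replaces the integrand $\la^k(T-\la)^{-1}$ by
\[
\left(\frac{\la^2-\rho^2}{\la}\right)^{k}(T-\la)^{-1}.
\]
The point is the elementary identity $\bigl|\tfrac{\la^2-\rho^2}{\la}\bigr|=2|\Im\la|$ for $|\la|=\rho$, so on $\mathcal C$ the integrand is bounded by $2^k|\Im\la|^k\cdot M/|\Im\la|^n = 2^kM|\Im\la|^{k-n}$, which is finite for $k\ge n$; the circle integral is then at most $2^kM\rho^{k-n+1}$. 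Expanding $\bigl(\tfrac{\la^2-\rho^2}{\la}\bigr)^k$ by the binomial theorem and using that $\int_{\mathcal C}\la^{-j}(T-\la)^{-1}\,d\la=0$ for $j\ge 1$ (push the contour to infinity), one obtains a relation expressing $T^k$ plus a combination of lower powers $T^{2j-k}$ with coefficients involving $\rho^{2(k-j)}$. Estimating those lower powers trivially by $\|T\|^{2j-k}$ and then letting $\rho\downarrow r(T)$ produces both the factor $r(T)$ and the constant $M+\|T\|^{n-1}$. This damping-by-$|\Im\la|^k$ device is the missing idea; without it the contour integral is not even finite.
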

\begin{proof}
For $\rho > 0$ we define the function
$$
M(\rho) = \sup\{|\Im\la|^n\|(T - \la)^{-1}\| : 0 < |\Im\la| < \rho\}.
$$
It is obvious that this function is non-decreasing and continuous. Therefore, the infimum $M(0):=\inf_{\rho > 0}\,M(\rho)$ exists. We have $M=M(\|T\|)$.

Let $k\ge n$. Let $\mathcal C$ be the circle with center $0$ and radius $\rho > r(T)$. For $0 < |\Im\la| < \rho$ we have
\begin{equation}\label{e:polres2}
\|(T - \la)^{-1}\|\;\le\;\frac{M(\rho)}{|\Im\la|^n}.
\end{equation}
Observe that for $j\in\N$, $j \ge 1$, the function $\lambda\mapsto\la^{-j}(T - \la)^{-1}$ is holomorphic outside of $\mathcal C$. Due to $\|(T - \la)^{-1}\| = O(|\la|^{-1})$ as $|\la|\to\infty$, the Cauchy integral theorem and standard estimates of contour integrals we obtain
$$
\int_{\mathcal C}\,\la^{-j}(T - \la)^{-1}\,d\la = 0,\quad j\ge 1.
$$
Therefore, the relation
$$
\left(\frac{\la^2 - \rho^2}{\la}\right)^k = \sum_{j=0}^k\,\binom k j\,\la^{2j-k}\left(-\rho^2\right)^{k-j}
$$
yields
$$
-\frac 1 {2\pi i}\,\int_{\mathcal C}\,\left(\frac{\la^2 -
\rho^2}{\la}\right)^k (T - \la)^{-1}\,d\la = \sum_{j = \lceil
k/2\rceil}^k\,\binom k j\,(-\rho^2)^{k-j}T^{2j-k},
$$
where $\lceil k/2\rceil$ denotes the smallest integer larger than $k/2$.
Since $k\ge n$ and $\left|\frac{\la^2 - \rho^2}{\la}\right| = 2|\Im\la|$ for $\la\in\mathcal C$, together with \eqref{e:polres2} this gives
$$
\left\|T^k + \sum_{j = \lceil k/2\rceil}^{k-1}\,\binom k j\,(-\rho^2)^{k-j}T^{2j-k}\right\| \le 2^k M(\rho)\rho^{k-n+1},
$$
and hence
$$
\left\|T^k\right\|\le \left(2^k M(\rho)\rho^{k-n} + \sum_{j = \lceil k/2\rceil}^{k-1}\,\binom k j\,\rho^{2(k-j)-1}\|T\|^{2j-k}\right)\,\rho.
$$
Letting $\rho\to r(T)$ we obtain
\begin{align*}
\left\|T^k\right\|
&\le \left(2^k M(r(T))\|T\|^{k-n} + \sum_{j=0}^{k}\,\binom k j\,\|T\|^{2(k-j)-1}\|T\|^{2j-k}\right)\,r(T).
\end{align*}
We have $M(r(T))\le M(\|T\|)$,
which leads to the desired estimate
with $M= M(\|T\|)$.
\end{proof}

For a finite interval $\Delta$ we denote by $\ell(\Delta)$  the length of $\Delta$.

\begin{corollary}\label{c:power}
Let $T$ be as in Lemma {\rm \ref{l:power}}.
Then there exists $C > 0$ such that for each $k\ge n$, each $\la\in\sigma(T)$ and each compact interval $\Delta$ with $\la\in\Delta$ and $\ell(\Delta)\le\|T\|$ we have
$$
\left\|(T|\calL_\Delta - \la)^k\right\| \;\le\; 4^k\|T\|^{k}C\cdot\ell(\Delta),
$$
where $\calL_\Delta$ denotes the maximal spectral subspace of $T$ corresponding to $\Delta$.
\end{corollary}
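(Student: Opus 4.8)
The plan is to apply Lemma~\ref{l:power} to the operator $S:=T|\calL_\Delta-\lambda=(T-\lambda)|\calL_\Delta$, viewed as a bounded operator on the Banach space $\calL_\Delta$, and then to read off the desired estimate from the conclusion of that lemma by bounding $\|S\|$, $r(S)$ and the resolvent growth of $S$ by quantities depending on $T$ only. If $T|\calL_\Delta-\lambda=0$ (in particular if $\calL_\Delta=\{0\}$) the asserted inequality is trivial, so I assume $S\neq 0$.

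First I would record the elementary facts. By the defining property of the maximal spectral subspace, $\sigma(S)=\sigma(T|\calL_\Delta)-\lambda\subset(\sigma(T)\cap\Delta)-\lambda$, and since $\lambda\in\Delta$ this is contained in $[-\ell(\Delta),\ell(\Delta)]$ (which by hypothesis lies in $[-\|T\|,\|T\|]$); hence $S$ has real spectrum and $r(S)\le\ell(\Delta)$. Moreover, since the norm of the restriction of an operator to an invariant subspace does not exceed the norm of the operator, and since $\lambda\in\sigma(T)$ forces $|\lambda|\le r(T)\le\|T\|$, we get $\|S\|=\|(T-\lambda)|\calL_\Delta\|\le\|T-\lambda\|\le\|T\|+|\lambda|\le 2\|T\|$.

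The main step is the resolvent estimate. Fix $z\in\C$ with $\Im z\neq 0$. Then $\lambda+z$ is non-real, hence lies in $\rho(T)$ because $\sigma(T)\subset\R$; since $\calL_\Delta$ is hyperinvariant with respect to $T$, the bounded operator $(T-(\lambda+z))^{-1}$ leaves $\calL_\Delta$ invariant, so $S-z=(T-(\lambda+z))|\calL_\Delta$ is boundedly invertible on $\calL_\Delta$ with inverse $(T-(\lambda+z))^{-1}|\calL_\Delta$, and consequently $\|(S-z)^{-1}\|\le\|(T-(\lambda+z))^{-1}\|$. Because $\lambda$ is real, $\Im(\lambda+z)=\Im z$; therefore, for every $z$ with $0<|\Im z|<\|S\|\le 2\|T\|$,
$$
|\Im z|^{n}\,\|(S-z)^{-1}\|\;\le\;|\Im(\lambda+z)|^{n}\,\|(T-(\lambda+z))^{-1}\|\;\le\;M',
$$
where $M':=\sup\{\,|\Im w|^{n}\|(T-w)^{-1}\|:0<|\Im w|<2\|T\|\,\}$ is the value at $2\|T\|$ of the finite, non-decreasing function $M(\cdot)$ from the proof of Lemma~\ref{l:power}, and hence depends on $T$ only. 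In particular the growth of the resolvent of $S$ is of order $n$, and the constant denoted $M$ in Lemma~\ref{l:power}, applied to $S$, is $\le M'$.

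Lemma~\ref{l:power}, applied to $S$ with its resolvent constant bounded by $M'$ as just noted, therefore yields, for every $k\ge n$,
$$
\big\|(T|\calL_\Delta-\lambda)^{k}\big\|=\|S^{k}\|\;\le\;2^{k}\|S\|^{k-n}\big(M'+\|S\|^{n-1}\big)\,r(S)\;\le\;\frac{4^{k}}{2^{n}}\,\|T\|^{k-n}\big(M'+(2\|T\|)^{n-1}\big)\,\ell(\Delta),
$$
the last step using $\|S\|\le 2\|T\|$ and $r(S)\le\ell(\Delta)$. Since $T\neq 0$, this is precisely of the claimed form $4^{k}\|T\|^{k}\,C\cdot\ell(\Delta)$ with $C:=\big(M'+(2\|T\|)^{n-1}\big)/(2^{n}\|T\|^{n})>0$, a constant depending on $T$ alone. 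I expect the one point requiring some care is exactly the control of the resolvent constant for $S$: since $\|S\|$ may exceed $\|T\|$, the bound on $\|(T-\lambda)^{-1}\|$ valid only on the strip $\{0<|\Im\lambda|<\|T\|\}$ does not by itself suffice, which is why one passes to the still finite constant $M(2\|T\|)$ — legitimate because $\|S\|\le 2\|T\|$.
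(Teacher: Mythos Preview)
Your proof is correct and follows essentially the same route as the paper: apply Lemma~\ref{l:power} to $S=T|\calL_\Delta-\lambda$, using $\|S\|\le 2\|T\|$, $r(S)\le\ell(\Delta)$, and the bound $\|(S-z)^{-1}\|\le\|(T-(\lambda+z))^{-1}\|$ (valid since $\lambda\in\R$) to control the resolvent constant of $S$ by $M(2\|T\|)$. The paper argues identically, only more tersely; your explicit justification of why the resolvent of the restriction is dominated by that of $T$ (via hyperinvariance) is a welcome elaboration of the paper's ``clearly''.
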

\begin{proof}
We have $\sigma(T_\Delta)\subset\Delta$, where $T_\Delta := T|\calL_\Delta$. Clearly, the growth of the resolvent of $T_\Delta - \la$ is of order $n$. Since $\|T_\Delta - \la\|\le\|T\| + |\la|\le 2\|T\|$ and $r(T_\Delta - \la)\le\ell(\Delta)$, Lemma \ref{l:power} gives the estimate
$$
\left\|\left(T_\Delta - \la\right)^k\right\|
\le 2^k\left(2\|T\|\right)^{k-n}\left(\wt M + 2^{n-1}\|T\|^{n-1}\right)\ell(\Delta)
$$
with $\wt M = \sup\{|\Im\mu|^n\|(T_\Delta - \la - \mu)^{-1}\| : 0 < |\Im\mu| < \|T_\Delta - \la\|\}$. As $\la$ is real,
\begin{align*}
\wt M &\le \sup\{|\Im\mu|^n\|(T - \la - \mu)^{-1}\| : 0 < |\Im\mu| < 2\|T\|\}\\
&\le \sup\{|\Im\mu|^n\|(T - \mu)^{-1}\| : 0 < |\Im\mu| < 2\|T\|\},
\end{align*}
which is independent of $\Delta$, $k$ and $\la$.
\end{proof}

\section{Spectral points of positive type of bounded operators in G-spaces}
Recall that an inner product space $(\calH,\product)$ is called a {\it Krein space} if there exist subspaces $\calH_+$ and $\calH_-$ such that
$(\calH_+,\product )$ and $(\calH_-, -\product )$ are Hilbert spaces
and
\begin{equation}\label{funddecomp}
 \calH = \calH_+\,\dotplus\,\calH_-,
\end{equation}
where $\dotplus$ denotes the direct sum of subspaces. We refer to \eqref{funddecomp} as a {\it fundamental decomposition}  of the Krein space
$(\calH,\product)$.

An inner product space $(\calH,\product)$ is called a {\it $G$-space} if $\calH$ is a Hilbert space and the inner product $\product$ is continuous with respect to the norm $\|\cdot\|$ on $\calH$, that is, there exists $c>0$ such that
$$
|[x,y]|\,\le c\|x\|\|y\|\quad\text{for all }x,y\in\calH.
$$
Let $\hproduct$ be a Hilbert space inner product on $\calH$ inducing $\|\cdot\|$. Then the inner products $\hproduct$ and $\product$ are connected via
$$
[x,y] = (Gx,y),\quad x,y\in\calH,
$$
where $G\in L(\calH)$ is a uniquely determined selfadjoint operator in $(\calH,\hproduct)$. It is well known that $(\calH,\product)$ is a Krein space if and only if $G$ is boundedly invertible, see, e.g.\ \cite{B,AI}. A bounded operator $A$ in the $G$-space $\calH$ is said to be {\it $\product$-selfadjoint} or {\it $G$-selfadjoint} if
\begin{equation}\label{Gsa}
[Ax,y] = [x,Ay]
\end{equation}
holds for all $x,y\in\calH$.

\begin{remark}
Note that in a $G$-space it is in general not possible to define a bounded adjoint with respect to $\product$ of a bounded operator. However, in a Krein space this is possible. In this case, the  usual notion of selfadjointness in a Krein space coincides with $\product$-selfadjointness in $G$-spaces.
\end{remark}

Spectral points of definite type, defined below for bounded operators in a $G$-space, were defined for $\product$-selfadjoint operators in $G$-spaces in \cite{LMM} and in \cite{ABJT} for arbitrary operators (and relations) in Krein spaces.

\begin{definition}\label{definition++}
For a  bounded operator $A$ in the $G$-space $(\calH,\product)$ a point $\la\in\sap(A)$ is called a spectral point of {\it positive} {\rm (}{\it negative}{\rm )} {\it type of $A$} if for every sequence $(x_n)$ with $\|x_n\|=1$ and $\|(A - \la)x_n\|\to 0$ as $n\to\infty$, we have
$$
\liminf_{n\to\infty}\, [x_n,x_n] > 0 \quad
\left(\,\limsup_{n\to\infty}\,[x_n,x_n] < 0, \,\text{respectively}\right).
$$
We denote the set of all points of positive (negative) type of $A$ by $\spp(A)$ ($\smm(A)$, respectively). A set $\Delta\subset\C$ is said to be {\it of positive} ({\it negative}) {\it type with respect to $A$} if every approximate eigenvalue of $A$ in $\Delta$ belongs to $\spp(A)$ ($\smm(A)$, respectively).
\end{definition}

\begin{remark}\label{r:spp_real}
If the operator $A$ is $\product$-selfadjoint, then the sets $\spp(A)$ and $\smm(A)$ are contained in $\R$ (cf.\ \cite{LMM}).
\end{remark}

The following lemma is well known for selfadjoint operators in Krein spaces and $\product$-selfadjoint operators in $G$-spaces (see e.g.\ \cite{AJT,LMM}). The proof for arbitrary bounded operators remains essentially the same. However, for the convenience of the reader we give a short proof here.

\begin{lemma}\label{l:compact}
Let $A$ be a bounded operator in the $G$-space $(\calH,\product)$. Then a compact set $K\subset\C$ is of positive type with respect to $A$ if and only if there exist a neighbourhood $\calU$ of $K$ in $\C$ and numbers $\veps,\delta > 0$ such that for all $x\in\calH$ and each $\la\in\calU$ we have
$$
\|(A - \la)x\|\le\veps\|x\|\quad\Longrightarrow\quad
[x,x]\ge\delta\|x\|^2.
$$
In this case, the set $\calU$ is of positive type with respect to $A$.
\end{lemma}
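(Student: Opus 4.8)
The plan is to prove the two directions of the equivalence separately. The reverse implication ("existence of $\calU,\veps,\delta$ $\Rightarrow$ $K$ of positive type") is just an unwinding of Definition \ref{definition++}, and it will automatically give the final assertion of the lemma as well; the forward implication is the one that needs an argument, and I would obtain it by a contradiction argument based on the compactness of $K$.

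First I would dispatch the easy direction. Assume $\calU$, $\veps>0$, $\delta>0$ are as in the statement. To see that $\calU$ is of positive type with respect to $A$ — which contains the claim that $K$ is, since $K\subset\calU$ — take any $\la\in\calU\cap\sap(A)$ and any sequence $(x_n)$ with $\|x_n\|=1$ and $\|(A-\la)x_n\|\to 0$. For all sufficiently large $n$ we have $\|(A-\la)x_n\|\le\veps=\veps\|x_n\|$, so the hypothesis forces $[x_n,x_n]\ge\delta\|x_n\|^2=\delta$; hence $\liminf_{n\to\infty}[x_n,x_n]\ge\delta>0$ and $\la\in\spp(A)$. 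This proves the reverse implication, and, applied to the $\calU$ produced by the forward direction, also the last sentence of the lemma.

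For the forward implication I would argue by contradiction. Suppose $K$ is of positive type but that no triple $(\calU,\veps,\delta)$ with the required property exists. Test this against the shrinking neighbourhoods $\calU_k:=\{z\in\C:\operatorname{dist}(z,K)<1/k\}$ together with $\veps=\delta=1/k$: for each $k\in\N$ there must be a point $\la_k\in\calU_k$ and a vector $x_k\ne 0$, which I normalize to $\|x_k\|=1$, such that $\|(A-\la_k)x_k\|\le 1/k$ while $[x_k,x_k]<1/k$. The sequence $(\la_k)$ stays within distance $1$ of the compact set $K$, hence is bounded and has a subsequence $\la_{k_j}\to\la_0$; since $\operatorname{dist}(\la_k,K)\to 0$ and $K$ is closed, $\la_0\in K$. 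Because $A$ is bounded and $\|x_{k_j}\|=1$,
$$
\|(A-\la_0)x_{k_j}\|\le\|(A-\la_{k_j})x_{k_j}\|+|\la_{k_j}-\la_0|\;\longrightarrow\;0,
$$
so $(x_{k_j})_j$ is a normed approximate eigensequence of $A$ at $\la_0$, whence $\la_0\in K\cap\sap(A)$. Since $K$ is of positive type, $\la_0\in\spp(A)$, so $\liminf_j[x_{k_j},x_{k_j}]>0$; but $[x_{k_j},x_{k_j}]<1/k_j\to 0$, a contradiction.

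I expect the only genuinely delicate point to be the one just used in the converse: checking that the cluster point $\la_0$ of the "bad" parameters really lies in $K$ (this is where closedness — hence compactness — of $K$ is essential) and that passing from $\la_{k_j}$ to $\la_0$ preserves the approximate-eigensequence property (this uses boundedness of $A$ together with the normalization $\|x_{k_j}\|=1$). The remaining steps are routine manipulations of the definitions.
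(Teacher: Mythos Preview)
Your proof is correct. The only difference from the paper's argument is in how compactness is invoked for the forward implication: the paper proceeds directly, first showing that for each fixed $\la_0\in K$ one has a local estimate $\|(A-\la_0)x\|\le 2\veps_0\|x\|\Rightarrow[x,x]\ge\delta_0\|x\|^2$ (itself obtained by a small contradiction argument from Definition~\ref{definition++}), then enlarging this to the ball $|\la-\la_0|<\veps_0$, and finally extracting a finite subcover of $K$. You instead run a single global contradiction and use sequential compactness (Bolzano--Weierstrass) to locate a bad point $\la_0\in K$. These are the two standard faces of the same compactness argument; the paper's version is slightly more constructive (one could in principle read off $\veps,\delta$ from the finite cover), while yours is a bit more streamlined.
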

\begin{proof}
Assume that $K$ is a compact set of positive type with respect to $A$, i.e.\ $K\cap\sap(A)\subset\spp(A)$. Let $\la_0\in K$. Then it follows from Definition \ref{definition++} and the properties of the points of regular type of $A$ that there exist $\veps_0,\delta_0 > 0$ such that for all $x\in\calH$ we have
$$
\|(A - \la_0)x\|\le 2\veps_0\|x\|\quad\Longrightarrow\quad
[x,x]\ge\delta_0\|x\|^2.
$$
From this we easily conclude that for all $x\in\calH$ and all $\la\in\C$ with $|\la - \la_0| < \veps_0$ we have
$$
\|(A - \la)x\|\le\veps_0\|x\|\quad\Longrightarrow\quad
[x,x]\ge\delta_0\|x\|^2.
$$
Since $\la_0$ was an arbitrary point in $K$, the assertion follows from the compactness of $K$. The converse statement is evident.
\end{proof}

One of the main results of \cite{LMM} is that under a certain condition a $\product$-selfadjoint operator in a $G$-space has a local spectral function of positive type on intervals which are of positive type with respect to the operator. Let us recall the definition of such a local spectral function and the exact statement for $\product$-selfadjoint operators.

\begin{definition}\label{locspecfct}
Let $(\calH,\product)$ be a $G$-space, $A\in L(\calH)$ and $S\subset\C$. A set function $E$ mapping from the system $\mathcal B(S)$ of Borel-measurable subsets of $S$ whose closure is also contained in $S$ to $L(\calH)$ is called a {\it local spectral function of positive type} of the operator $A$ on $S$ if for all $Q,Q_1,Q_2,\ldots\in\mathcal B(S)$ the following conditions are satisfied:
\begin{enumerate}
\item[(i)]   $(E(Q)\calH,\product)$ is a Hilbert space and $E(Q)$ is
 $\product$-selfadjoint.
\item[(ii)]  $E(Q_1\cap Q_2) = E(Q_1)E(Q_2)$.
\item[(iii)] If $Q_1,Q_2,\ldots\in\mathcal B(S)$ are mutually disjoint, then
$$
E\left(\bigcup_{k=1}^\infty\,Q_k\right) = \sum_{k=1}^\infty\,E(Q_k),
$$
where the sum converges in the strong operator topology.
\item[(iv)]  $AB = BA\;\;\Longrightarrow\;\;E(Q)B=BE(Q)$ \;for every $B\in L(\calH)$.
\item[(v)]   $\sigma(A|E(Q)\calH)\subset\ol{\sigma(A)\cap Q}$.
\item[(vi)]  $\sigma(A|(I-E(Q))\calH)\subset\ol{\sigma(A)\setminus Q}$.
\end{enumerate}
\end{definition}

Note that (ii) implies that $E(Q)$ is a projection for all $Q\in\mathcal B(S)$ and that from (iii) (or (v)) it follows that $E(\emptyset) = 0$. By $\C^+$ ($\C^-$) we denote the open upper (lower, respectively) halfplane of the complex plane $\C$.

\begin{theorem}\label{t:lmm}
Let $A$ be a $\product$-selfadjoint operator in the $G$-space $(\calH,\product)$. If the interval $\Delta$ is of positive type with respect to $A$ and if each of the sets $\rho(A)\cap\C^+$ and $\rho(A)\cap\C^-$ accumulates to each point of $\Delta$, respectively, then $A$ has a local spectral function $E$ of positive type on $\Delta$. For each closed interval $\delta\subset\Delta$ the subspace $E(\delta)\calH$ is the maximal spectral subspace of $A$ corresponding to $\delta$.
\end{theorem}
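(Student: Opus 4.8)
The plan is: (1) turn the positive–type hypothesis, via Lemma~\ref{l:compact}, into a finite–order resolvent estimate for $A$ near $\Delta$; (2) use the Levinson/Dynkin machinery recalled before Lemma~\ref{l:power} to produce, for each sufficiently short closed interval $\delta\subset\Delta$, the maximal spectral subspace $\calL_\delta$, and to show it is uniformly $\product$-positive; (3) assemble the $E(\delta)$ into a set function on $\mathcal B(\Delta)$ and verify the axioms of Definition~\ref{locspecfct}. The hard part is (3): making $E$ well defined and $\sigma$-additive across all closed $\delta\subset\Delta$, in particular at endpoints lying in $\sigma(A)$.

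\emph{Step 1.} Since $A$ is $\product$-selfadjoint, $[Ax,x]=[x,Ax]$ and $[x,x]=(Gx,x)$ are real for every $x$, so for every $\la\in\C$ one has $|\Im\la|\,|[x,x]|=|\Im[(A-\la)x,x]|\le\|G\|\,\|(A-\la)x\|\,\|x\|$. By Lemma~\ref{l:compact} there are a neighbourhood $\calU$ of $\Delta$ and $\veps,\delta_0>0$ with the implication $\|(A-\la)x\|\le\veps\|x\|\Rightarrow[x,x]\ge\delta_0\|x\|^2$ for all $\la\in\calU$. Shrinking $\calU$ to a convex $r$-neighbourhood of $\Delta$ on which $|\Im\la|\le\veps\|G\|/\delta_0$ and distinguishing the cases $\|(A-\la)x\|>\veps\|x\|$ and $\|(A-\la)x\|\le\veps\|x\|$, I would obtain $\|(A-\la)x\|\ge(\delta_0/\|G\|)|\Im\la|\,\|x\|$ for all $x$ and all $\la\in\calU\setminus\R$; hence $A-\la$ is bounded below there, with closed range. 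Since $\rho(A)\cap\C^{+}$ accumulates to each point of $\Delta$, it meets the connected set $\calU\cap\C^{+}$; because the Fredholm index of $A-\la$ is locally constant where $A-\la$ is bounded below, $A-\la$ is in fact boundedly invertible throughout $\calU\cap\C^{+}$, and likewise on $\calU\cap\C^{-}$. Thus $\calU\setminus\R\subset\rho(A)$, $\sigma(A)\cap\calU\subset\R$, and $\|(A-\la)^{-1}\|\le(\|G\|/\delta_0)|\Im\la|^{-1}$ for $\la\in\calU\setminus\R$: the growth of the resolvent of $A$ near $\Delta$ is of finite order $1$.

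\emph{Step 2.} Fix a closed interval $\delta\subset\Delta$ with $\ell(\delta)$ small (quantified below). Because $\rho\mapsto(\|G\|/\delta_0)\rho^{-1}$ satisfies the Levinson condition and the construction in \cite[Chapter II, \textsection 2, Theorem 5]{LM} is local — using only the reality of $\sigma(A)$ and the resolvent bound on a compact neighbourhood of $\delta$ inside $\calU$ — the maximal spectral subspace $\calL_\delta$ of $A$ corresponding to $\delta$ exists; it is hyperinvariant and $\sigma(A|\calL_\delta)\subset\sigma(A)\cap\delta$. Now I apply (a local form of) Corollary~\ref{c:power} to $A|\calL_\delta$, which has real spectrum in $\delta$ and resolvent of order $1$ near $\delta$: with $k=1$ this yields $\|A|\calL_\delta-\la_\delta\|\le c_0\,\ell(\delta)$ for some $\la_\delta\in\delta$ and a constant $c_0$ depending only on $\|A\|$ and the constant of Step~1. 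Choosing $\ell(\delta)\le\veps/c_0$, every unit vector $x\in\calL_\delta$ satisfies $\|(A-\la_\delta)x\|\le\veps$ with $\la_\delta\in\Delta\subset\calU$, hence $[x,x]\ge\delta_0$. Therefore $(\calL_\delta,\product)$ is a Hilbert space, uniformly positive with the constant $\delta_0$ uniform over all such $\delta$; its $\product$-orthogonal complement $\calL_\delta^{\gperp}$ is then a direct complement, and I define $E(\delta)$ to be the $\product$-selfadjoint projection onto $\calL_\delta$ along $\calL_\delta^{\gperp}$.

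\emph{Step 3 and the main obstacle.} For a Borel set $Q$ with $\ol Q\subset\Delta$, take a short-interval partition of a closed $\delta\supset\ol Q$, $\delta\subset\Delta$; for disjoint short closed $\delta',\delta''$ the projections $E(\delta'),E(\delta'')$ are orthogonal, since $\calL_{\delta'}\cap\calL_{\delta''}$ is $A$-invariant with spectrum in the empty set $\delta'\cap\delta''$ and Lemma~\ref{l:rosenblum} forces it to be $\{0\}$. One then extends $E$ to $\mathcal B(Q)$ either as the countably additive hull of the $E(\delta')$ or, equivalently, as the spectral measure at $Q$ of the bounded selfadjoint operator $A|\calL_\delta$ on the Hilbert space $(\calL_\delta,\product)$ composed with $E(\delta)$; consistency of these choices again follows from Lemma~\ref{l:rosenblum}. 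With $E$ in hand, (ii), (iii) are measure properties; (i) holds because $E(Q)\calH$ is a subspace of the Hilbert space $(\calL_\delta,\product)$; (iv) follows from the hyperinvariance of $\calL_\delta$ together with the corresponding invariance of $\calL_\delta^{\gperp}$ under every bounded operator commuting with $A$; (v), (vi) follow from Lemma~\ref{l:invariant_spec} and $\sigma(A|\calL_\delta)\subset\sigma(A)\cap\delta$; and for a closed interval $\delta\subset\Delta$ we get $E(\delta)\calH=\calL_\delta$, the maximal spectral subspace, by construction. The delicate point is that at an endpoint of a closed $\delta$ lying in $\sigma(A)$ the naive Riesz integral $-\frac{1}{2\pi i}\oint(A-z)^{-1}\,dz$ diverges logarithmically, and this is exactly where both the finite order of the resolvent and the positive type are indispensable: the former lets one replace sharp projections by smooth cut-offs $\phi_k\downarrow\chi_\delta$ through the Dynkin calculus furnished by the Levinson condition, while the latter forces the scalar set functions $Q\mapsto[E(Q)x,x]$ to be nonnegative of total variation $\lesssim\|x\|^{2}$, so that $E(\delta)$ exists as the strong operator limit of $\phi_k(A)$ independently of the approximating sequence. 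Verifying this limit and the orthogonality relations is the technical heart; Steps~1--2 then upgrade it to the Hilbert-space statement~(i).
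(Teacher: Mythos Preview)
The paper does not prove Theorem~\ref{t:lmm}; it is quoted from \cite{LMM} (Langer--Markus--Matsaev) and used as a black box. So there is no ``paper's own proof'' to compare against, and you are effectively attempting an independent proof of the cited result.

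Your outline follows the LMM strategy in spirit. Step~1 is correct and is indeed the standard way to extract a first-order resolvent bound from the positive-type hypothesis together with $\product$-selfadjointness; the semi-Fredholm/index argument using the accumulation hypothesis is the right mechanism to pass from ``bounded below'' to ``invertible'' on $\calU\setminus\R$. Step~2 is also on the right track: once $(\calL_\delta,\product)$ is a Hilbert space for short $\delta$, the restriction $A|\calL_\delta$ is genuinely selfadjoint there, and the spectral theorem gives you a spectral measure for free on each such piece. This is the key reduction, and you identify it.

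Where your sketch remains genuinely incomplete is Step~3. You have two competing constructions (countably additive hull of the $E(\delta')$ versus the Hilbert-space spectral measure of $A|\calL_\delta$ composed with $E(\delta)$), and the phrase ``consistency of these choices again follows from Lemma~\ref{l:rosenblum}'' hides real work: Rosenblum requires disjoint spectra, which fails precisely at partition endpoints lying in $\sigma(A)$, the very points you flag as delicate. The Dynkin-calculus limit argument you sketch at the end is the actual LMM mechanism, but as written it is an indication of intent rather than a proof; in particular you have not shown that the strong limit $\lim_k\phi_k(A)$ exists (positivity of $Q\mapsto[E(Q)x,x]$ bounds the variation, but existence of the limit also needs a monotonicity/Cauchy argument in the Hilbert-space topology of each $\calL_\delta$). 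Property~(vi) is also not addressed: Lemma~\ref{l:invariant_spec} alone does not give $\sigma(A|(I-E(Q))\calH)\subset\ol{\sigma(A)\setminus Q}$, since $(I-E(Q))\calH$ need not be a maximal spectral subspace for the complement. In \cite{LMM} these points are handled carefully; here they are stated rather than proved.
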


\section{Locally definite normal operators in Krein spaces}
For the rest of this paper let $(\calH,\product)$ be a Krein space. It is our aim to extend Theorem \ref{t:lmm} to normal operators in Krein spaces. Recall that a bounded operator $N$ in a Krein space $(\calH,\product)$ is called {\it normal} if it commutes with its adjoint $N^+$, i.e.
$$
N^+N = NN^+.
$$
By definition the {\it real part} of a bounded operator $C$ in a Krein space $(\calH,\product)$ is the operator $(C+C^+)/2$ and the {\it imaginary part} is given by $(C-C^+)/2i$. It is clear that both real and imaginary part of an arbitrary bounded operator are $\product$-selfadjoint. Moreover, it is easy to see that a bounded operator in $(\calH,\product)$ is normal if and only if its real part and its imaginary part commute.

\begin{lemma}\label{l:only_sap}
Let $N$ be a normal operator in the Krein space $(\calH,\product)$.
If $\Re N$ and $\Im N$ have real spectra only, then $\sigma(N) = \sap(N)$.
\end{lemma}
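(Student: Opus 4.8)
The plan is to exploit the fact that $N$, $N^+$, $A := \Re N$ and $B := \Im N$ all commute, together with the spectral mapping result of Lemma~\ref{l:polynom}. Since $\sap(N)\subset\sigma(N)$ always holds by \eqref{e:incs}, the real content is the reverse inclusion $\sigma(N)\subset\sap(N)$. I would fix $\la\in\sigma(N)$ and argue that $\la$ is an approximate eigenvalue by producing a normed sequence $(x_n)$ with $(N-\la)x_n\to 0$.

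First I would apply Lemma~\ref{l:polynom} with $S = N$, $T = N^+$: the hypothesis that $A = (N+N^+)/2$ and $B = (N-N^+)/2i$ have real spectra is exactly condition \eqref{realspectra}, so the lemma gives $\sigma(N) = \{p(\mu,\ol\mu):\mu\in\sigma(N)\}$ for any two-variable polynomial $p$, and in particular $\sigma(A) = \{\Re\mu:\mu\in\sigma(N)\}$ and $\sigma(B) = \{\Im\mu:\mu\in\sigma(N)\}$. Thus, writing $\la = \alpha + i\beta$ with $\alpha,\beta\in\R$, we have $\alpha\in\sigma(A)$ and $\beta\in\sigma(B)$. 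The key point is that both $A$ and $B$ have real spectrum, so by \eqref{e:incs} their spectra coincide with their approximate point spectra (since $\sigma(A)\subset\R$ forces $\sigma(A) = \partial\sigma(A)$, hence $\sigma(A) = \sap(A)$, and similarly for $B$). Therefore $\alpha\in\sap(A)$ and $\beta\in\sap(B)$.

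The main obstacle is that having approximate eigensequences for $A$ at $\alpha$ and for $B$ at $\beta$ separately does not immediately yield a common approximate eigensequence at $\la$ for $N$; one must combine the spectral information of the two commuting operators. Here I would use that $\la\in\sigma(N)$ together with normality: set $P := (A-\alpha)^*(A-\alpha) + (B-\beta)^*(B-\beta)$, a positive $\product$-dummy — more cleanly, pass to the Hilbert-space adjoint and consider $R := (N-\la)^*(N-\la) + (N-\la)(N-\la)^*$ relative to a fixed Hilbert inner product, or better, work inside the commutative Banach algebra generated by $A$ and $B$. Concretely: the operator $(A-\alpha)^2 + (B-\beta)^2$ is a polynomial $q(A,B)$ in the commuting pair, and by the first part of Lemma~\ref{l:polynom}, $0\in\sigma(q(A,B))$ because $\alpha+i\beta\in\sigma(N)$ maps to $q(\alpha,\beta) = 0$ under the spectral mapping identity. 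Since $q(A,B)$ has real (indeed, one checks, nonnegative-real-part-type) spectrum, $0\in\sap(q(A,B))$, giving a normed sequence $(x_n)$ with $\bigl((A-\alpha)^2 + (B-\beta)^2\bigr)x_n\to 0$. From $\|(A-\alpha)x_n\|^2 + \|(B-\beta)x_n\|^2 \lesssim |(([(A-\alpha)^2+(B-\beta)^2]x_n, x_n))|$ — using $G$-selfadjointness of $A-\alpha$ and $B-\beta$ and continuity of $\product$, bounding the indefinite pairing by the Hilbert norm — one deduces $(A-\alpha)x_n\to 0$ and $(B-\beta)x_n\to 0$, whence $(N-\la)x_n = (A-\alpha)x_n + i(B-\beta)x_n\to 0$. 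Thus $\la\in\sap(N)$, completing the proof. The care needed is in the inequality controlling the two nonnegative quantities by the single scalar pairing; this works because $G$-selfadjointness makes $((G(A-\alpha)^2 x_n,x_n)) = (G(A-\alpha)x_n,(A-\alpha)x_n)$, and one invokes that $\product$ on the relevant subspace, after splitting via a fundamental decomposition, dominates a genuine norm up to the bounded operator $G$ — alternatively, simply observe $(N-\la)^+(N-\la) = (A-\alpha)^2 + (B-\beta)^2 = (N-\la)(N-\la)^+$ by normality, so $0\in\sap((N-\la)^+(N-\la))$ directly yields $(N-\la)x_n\to 0$ via the identity $[(N-\la)^+(N-\la)x_n,x_n] = [(N-\la)x_n,(N-\la)x_n]$ and the same domination argument, which is the cleanest route.
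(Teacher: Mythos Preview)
Your argument is correct up to the point where you obtain a normed sequence $(x_n)$ with $D x_n\to 0$, where $D:=(N-\la)^+(N-\la)=(A-\alpha)^2+(B-\beta)^2$; indeed, Lemma~\ref{l:polynom} (applied with $S=N$, $T=N^+$) gives $\sigma(D)=\{|\mu-\la|^2:\mu\in\sigma(N)\}\subset[0,\infty)$, so $0\in\sigma(D)=\partial\sigma(D)\subset\sap(D)$.

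The gap is in the final step. From $Dx_n\to 0$ you only get $[(N-\la)x_n,(N-\la)x_n]=[Dx_n,x_n]\to 0$, and in a Krein space the indefinite square $[y_n,y_n]$ does \emph{not} control $\|y_n\|$: any fixed nonzero neutral vector already gives a counterexample. The same problem blocks the variant you sketch with $(A-\alpha)$ and $(B-\beta)$: the quantities $[(A-\alpha)x_n,(A-\alpha)x_n]$ and $[(B-\beta)x_n,(B-\beta)x_n]$ are real but can be negative and may cancel in their sum. Your appeals to ``domination via $G$'' or to a fundamental decomposition do not repair this, because $G$ is indefinite and there is no reason the relevant vectors lie in a uniformly positive subspace. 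So, as written, the implication $Dx_n\to 0\Rightarrow (N-\la)x_n\to 0$ is unjustified and in general false.

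The paper avoids this obstacle by a different route: it argues by contradiction that if $\la\in\sigma(N)\setminus\sap(N)$, then $N-\la$ is bounded below with proper closed range, hence $\calL:=\ker(N^+-\ol\la)\neq\{0\}$. Since $\calL$ is $N$- and $N^+$-invariant, Lemma~\ref{l:invariant_spec} keeps the spectra of $\Re N|\calL$ and $\Im N|\calL$ real, and Lemma~\ref{l:polynom} (applied to $S=N^+|\calL$, $T=N|\calL$) yields $\sigma(N|\calL)=\{\la\}$, so $\la\in\sap(N|\calL)\subset\sap(N)$, a contradiction. The key difference is that the paper produces a nontrivial invariant subspace on which the spectrum of $N$ collapses to a point, which forces $\la$ to be an approximate eigenvalue without any positivity argument.
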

\begin{proof}
Assume that $\la\in \sigma(N)\setminus\sap(N)$. Then $N - \la$ has a trivial kernel and $\ran(N - \la)\neq\calH$ is closed. Hence, $\ol\la\in\sigma_p(N^+)$. Set $\calL := \ker(N^+ - \ol\la)$. This subspace is $N$-invariant. By Lemma \ref{l:invariant_spec} the operators $\Re N|\calL$ and $\Im N|\calL$ have real spectra. Thus, by Lemma \ref{l:polynom} (with $S = N^+|\calL$ and $T = N|\calL$) we conclude that $\sigma(N|\calL) = \{\ol\mu : \mu\in\sigma(N^+|\calL)\} = \{\la\}$. Hence, $\la\in\sap(N|\calL)\subset\sap(N)$. A contradiction.
\end{proof}

The following theorem is the main result of this section.

\begin{theorem}\label{t:main}
Let $N$ be a normal operator in the Krein space $(\calH,\product)$. If $\Re N$ and $\Im N$ have real spectra and the growth of the resolvent of $\Im N$ is of finite order, then $N$ has a local spectral function of positive type on each closed rectangle $[a,b]\times [c,d]$ which is of positive type with respect to $N$.
\end{theorem}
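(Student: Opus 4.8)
The plan is to reduce the problem to the $\product$-selfadjoint case handled by Theorem~\ref{t:lmm}, applied simultaneously to $\Re N$ and $\Im N$, and then to glue the two one-dimensional spectral functions together. Write $A := \Re N$ and $B := \Im N$; by hypothesis both are $\product$-selfadjoint with real spectrum, and $N = A + iB$ with $AB = BA$. By Lemma~\ref{l:polynom} we have $\sigma(A) = \{\Re\la : \la\in\sigma(N)\}$ and $\sigma(B) = \{\Im\la : \la\in\sigma(N)\}$, so $\sigma(A)\subset[a,b]$ and $\sigma(B)\subset[c,d]$ may be assumed after intersecting with the rectangle. The first task is to produce a spectral function for $B$ on $[c,d]$. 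Here we cannot directly invoke Theorem~\ref{t:lmm}, since $[c,d]$ need not be of positive type with respect to $B$; instead we use the hypothesis that the resolvent of $B$ has finite order, which (via the Levinson-condition argument quoted in the text, \cite[Ch.~II, \textsection 2, Thm.~5]{LM}) guarantees that the maximal spectral subspaces $\calL_\delta$ of $B$ exist for all compact subintervals $\delta\subset[c,d]$, and these assemble into a local spectral function $F$ for $B$ (satisfying (ii)--(vi) of Definition~\ref{locspecfct}, but not necessarily the Hilbert-space property (i)).

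\textbf{Transferring positivity.} The key geometric point is that a closed horizontal strip $[a,b]\times\delta$ that meets $\sigma(N)$ only in positive-type spectrum forces the corresponding spectral subspace of $B$ to be uniformly positive. Concretely, fix a compact subinterval $\delta\subset[c,d]$ and set $\calL_\delta := F(\delta)\calH$, the maximal spectral subspace of $B$ for $\delta$. Since $\calL_\delta$ is hyperinvariant for $B$ and $A$ commutes with $B$, it is $A$-invariant, hence $N$-invariant, and $\sigma(N|\calL_\delta)\subset[a,b]\times\delta$ (using Lemma~\ref{l:polynom} on the restriction together with $\sigma(B|\calL_\delta)\subset\delta$ and $\sigma(A|\calL_\delta)\subset[a,b]$ from Lemma~\ref{l:invariant_spec}). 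Now I claim $[a,b]\times\delta$ is of positive type with respect to $N$ whenever $\delta$ is small enough — this needs a compactness argument in the spirit of Lemma~\ref{l:compact}, covering $[a,b]$ by finitely many pieces. Consequently every normed approximate eigensequence of $N|\calL_\delta$ has $\liminf[x_n,x_n] > 0$; since $\partial\sigma(N|\calL_\delta)\subset\sap(N|\calL_\delta)$, one deduces by a standard argument (again via Lemma~\ref{l:compact} applied to $N|\calL_\delta$) that $[x,x]\ge\delta_0\|x\|^2$ for \emph{all} $x\in\calL_\delta$, i.e.\ $(\calL_\delta,\product)$ is a Hilbert space. Thus $N|\calL_\delta$ is a bounded normal operator (in the Krein-space sense) on a Hilbert space, hence genuinely normal there.

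\textbf{Building the spectral function on the rectangle.} Inside the Hilbert space $(\calL_\delta,\product)$ the operator $A_\delta := A|\calL_\delta$ is bounded selfadjoint with spectrum in $[a,b]$, so it has an ordinary spectral measure $E^{A_\delta}$; define, for a Borel rectangle $\delta' \times \delta\subset[a,b]\times[c,d]$ with $\delta$ a small interval,
\begin{equation*}
E(\delta'\times\delta) := E^{A_\delta}(\delta')\,F(\delta),
\end{equation*}
and extend to a $\sigma$-additive set function on all Borel subsets $Q$ of the rectangle by the usual approximation by finite unions of such products (using dominated convergence / the strong additivity of $F$). One then checks properties (i)--(vi) of Definition~\ref{locspecfct}: (i) because $E(Q)\calH\subset\calL_\delta$ for a suitable $\delta$ and $E^{A_\delta}(\delta')$ is a $\product$-selfadjoint projection in the Hilbert space $\calL_\delta$; (ii) by the commutativity of $E^{A_\delta}$ with $F$ (which follows since $F(\delta)$ is hyperinvariant for $B$, hence commutes with $A$ and with $E^{A_\delta}$), together with the compatibility of the spectral measures $E^{A_\delta}$ across nested $\delta$'s, which in turn follows from Rosenblum's Corollary (Lemma~\ref{l:rosenblum}) separating spectra on disjoint intervals; (iv) from hyperinvariance of all the ingredient projections; and (v)--(vi) from Lemma~\ref{l:polynom} together with the corresponding spectral-inclusion properties of $E^{A_\delta}$ and $F$.

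\textbf{Main obstacle.} I expect the crux to be twofold. First, the passage ``$[a,b]\times\delta$ of positive type $\Rightarrow$ uniform positivity of $\calL_\delta$'': the delicate part is that the maximal spectral subspace $\calL_\delta$ need not be spanned by approximate eigenvectors in any naive sense (its boundary spectrum is, but the interior need not be), so one must argue through $\partial\sigma(N|\calL_\delta)$ and the stability afforded by Lemma~\ref{l:compact} — and one has to control the \emph{uniformity} of the positivity constant $\delta_0$ in the interval $\delta$, since otherwise the Hilbert-space norms degenerate as $\delta$ shrinks and the global gluing fails. This is presumably where the finite-order hypothesis on the resolvent of $\Im N$ re-enters quantitatively, via Corollary~\ref{c:power} bounding $\|(B|\calL_\delta - \la)^k\|$ in terms of $\ell(\delta)$. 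Second, the $\sigma$-additivity and consistency of $E$ as $\delta$ ranges over a vanishing sequence of intervals covering a given Borel set requires a careful limiting argument to see that the strong limits exist and that the resulting projections are uniformly bounded — the finite-order growth is exactly what makes the relevant Riesz-type projections $\tfrac{1}{2\pi i}\oint (B-\la)^{-1}\,d\la$ norm-bounded independently of the contour, so this is where that hypothesis does its real work.
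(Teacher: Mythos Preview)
Your proposal has a genuine gap precisely where you flag the ``main obstacle''. The step ``$\sap(N|\calL_\delta)\subset\spp(N|\calL_\delta)$ implies $(\calL_\delta,\product)$ is a Hilbert space'' does not follow from Lemma~\ref{l:compact}: that lemma gives uniform positivity only on $\veps$-approximate eigenvectors, not on all of $\calL_\delta$, and the implication you want is essentially the theorem itself applied to $N|\calL_\delta$, so the argument is circular. In fact the claim is generally false: the rectangle $[a,b]\times[c,d]$ is merely \emph{a} set of positive type, not all of $\sigma(N)$, so $\sigma(\Re N|\calL_\delta)$ need not lie in $[a,b]$ and $\calL_\delta$ will typically contain vectors associated to $N$-spectrum outside the positive-type region, where $\product$ has no reason to be definite. (Your opening reduction ``$\sigma(A)\subset[a,b]$ may be assumed after intersecting with the rectangle'' is unjustified for the same reason---you cannot cut down $\sigma(\Re N)$ without already possessing a spectral function.)

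The paper avoids this circularity by transferring positivity not to $N|\calL_\Delta$ but to the $\product$-\emph{selfadjoint} operator $A_0:=\Re N|\calL_\Delta$, for which Theorem~\ref{t:lmm} is already available. The crux is proving $\Delta_1\cap\sigma(A_0)\subset\spp(A_0)$: an approximate eigenvector $x$ of $A_0$ at $\alpha\in\Delta_1$ must be shown to be an approximate eigenvector of $N$ at $\alpha+i\beta$ for some $\beta\in\Delta$, i.e.\ one needs $\|(B_0-\beta)x\|$ small. Here the finite-order growth enters quantitatively: Corollary~\ref{c:power} bounds $\|(B_0-\beta)^k\|$ for $k_0\le k\le 2k_0$ by a multiple of $\ell(\Delta)$, and a downward induction on $k$ (using the positivity \eqref{e:epsdel} at each step to control $\|(B_0-\beta)^{k-1}x\|$ through $\|(B_0-\beta)^{2k-2}x\|$) pushes this down to $k=1$. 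Theorem~\ref{t:lmm} then yields a Hilbert-space subspace $\calH_{\Delta_1\times\Delta}=E_\Delta(\Delta_1)\calL_\Delta$, in general a \emph{proper} subspace of $\calL_\Delta$; the spectral-subspace properties (a)--(f) and the gluing across $[c,d]$ are handled separately.
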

\begin{proof}
Let $[a,b]\times [c,d]$ be of positive type with respect to $N$. Together with Lemma \ref{l:only_sap} we have
$$
([a,b]\times [c,d])\cap\sigma(N)\subset\spp(N).
$$
By Lemma \ref{l:compact} there exist an open neighbourhood $\calU$ of $[a,b]\times [c,d]$ in $\C$ and numbers $\veps,\delta\in (0,1)$ such that
\begin{equation}\label{e:epsdel}
\la\in\calU,\;\;x\in\calH,\;\;\|(N - \la)x\|\le\veps\|x\|\quad\Longrightarrow\quad [x,x]\ge\delta\|x\|^2.
\end{equation}
By Corollary \ref{c:power}, there exists a value $\tau > 0$ such that for each compact interval $\Delta$ with length $\ell(\Delta) < \tau$ and any $\lambda\in\Delta\cap\sigma(\Im N)$ we have
\begin{equation}\label{e:NormEst}
\left\|(\Im N|\calL_\Delta - \lambda)^k\right\|\,\le\,\frac{\delta^{k-1}\veps^k}{2^k} \quad\text{for
all }\,k = k_0,k_0+1,\dots,2k_0,
\end{equation}
where $k_0$ is the order of growth of the resolvent of $\Im N$ and $\calL_\Delta$ is the maximal spectral subspace of $\Im N$ corresponding to the interval $\Delta$.

The proof will be divided into three steps. In the first step we define the spectral subspace corresponding to rectangles $\Delta_1\times\Delta_2\subset\calU$ with $\ell(\Delta_2) < \tau$. In the second step we prove some properties of the spectral subspaces defined in step 1. In the third step we define the spectral subspace corresponding to the rectangle $[a,b]\times [c,d]$ and complete the proof.

{\bf 1. } Let $\Delta_1$ and $\Delta$ be compact intervals such that $\Delta_1\times\Delta\subset\calU$ and $\ell(\Delta) < \tau$. Note that the inner product space $(\calL_\Delta,\product)$ is a $G$-space which is not necessarily a Krein space. Since a maximal spectral subspace is hyperinvariant (see, e.g.\ \cite{CF}), the space $\calL_\Delta$ is invariant with respect to $N$, $N^+$, $\Re N$ and $\Im N$. By $A_0$, $B_0$, $N_0$ and $N_{0,+}$ denote the restrictions of $\Re N$, $\Im N$, $N$ and $N^+$ to $\calL_\Delta$, respectively. Then we have, see Lemma \ref{l:invariant_spec},
\begin{equation}\label{e:basic}
\sigma(A_0)\subset\sigma(\Re N)\subset\R\quad\text{and}\quad\sigma(B_0)\subset\sigma(\Im N)\cap\Delta.
\end{equation}
Moreover, from $N_0 = A_0 + iB_0$, $N_{0,+} = A_0 - iB_0$, \eqref{e:basic} and Lemma \ref{l:polynom} we conclude
$$
\sigma(A_0) = \{\Re\la : \la\in\sigma(N_0)\}
\quad\text{ and }\quad
\sigma(B_0) = \{\Im\la : \la\in\sigma(N_0)\},
$$
hence
$$
\sigma(N_0)\subset\,\sigma(A_0)\times\Delta,
$$
The operator $A_0$ is obviously $\product$-selfadjoint. In the following we will show
\begin{equation}\label{e:pp}
\Delta_1\cap\sigma(A_0)\subset\spp(A_0).
\end{equation}
To this end set
$$
\wt\veps :=
\min\left\{\frac{\veps}{2},\,\frac{\delta^{j-2}\veps^j}{2^j(\|\Im N\| + r(\Im N))^{j-1}} : j = 2,\dots,k_0\right\},
$$
We may assume that $\Im N\neq 0$. Otherwise, the assertion of Theorem \ref{t:main} follows directly from Theorem \ref{t:lmm}. We will show that for all $\alpha\in\Delta_1\cap\sigma(A_0)$ and for all $x\in\calL_\Delta$ we have
$$
\|(A_0 - \alpha)x\|\le\wt\veps\|x\|\;\;\Longrightarrow\;\;[x,x]\ge\delta\|x\|^2,
$$
which then implies \eqref{e:pp}, see Lemma \ref{l:compact}. If $\sigma(\Im N)\cap\Delta = \emptyset$, then it follows from \eqref{e:basic} that $\calL_\Delta = \{0\}$, and nothing needs to be shown. Otherwise, there exists $\beta\in\Delta\cap\sigma(\Im N)$. Let $\alpha\in\Delta_1\cap\sigma(A_0)$ and $x\in\calL_\Delta$, $\|x\| = 1$, and suppose that $\|(A_0 - \alpha)x\|\le\wt\veps$. Let us prove that for all $j=1,\dots,2k_0$ we have
\begin{equation}\label{ImEst}
\left\|(B_0 - \beta)^jx\right\|\;\le\;\frac{\delta^{j-1}\veps^j}{2^j}.
\end{equation}
For $j = k_0,\dots,2k_0$ this is a direct consequence of \eqref{e:NormEst}. Assume now that \eqref{ImEst} holds for all $j\in\{k,\dots,k_0\}$ where $k\in\{2,\dots,k_0\}$ but does not hold for $j=k-1$, i.e.
\begin{equation}\label{ImDoof}
\left\|(B_0 - \beta)^{k-1}x\right\|\;>\;\frac{\delta^{k-2}\veps^{k-1}}{2^{k-1}}.
\end{equation}
Then we have
\begin{align*}
\bigg\|(N_0-(\alpha+i\beta))&\frac{(B_0 - \beta)^{k-1}x}{\|(B_0 - \beta)^{k-1}x\|}\bigg\|
\le \frac{\|B_0 - \beta\|^{k-1}\|(A_0 - \alpha)x\| + \|(B_0 - \beta)^{k}x\|}{\|(B_0 - \beta)^{k-1}x\|}\\
&\le \frac{2^{k-1}(\|\Im N\| + r(\Im N))^{k-1}}{\delta^{k-2}\veps^{k-1}}\wt\veps + \frac{2^{k-1}}{\delta^{k-2}\veps^{k-1}}\frac{\delta^{k-1}\veps^{k}}{2^{k}}\\
&\le \frac\veps 2 + \delta\frac\veps 2\le\veps.
\end{align*}
As $\alpha + i\beta\in\Delta_1\times\Delta\subset\calU$, it follows from \eqref{e:epsdel} that
$$
\delta\le\left[\frac{(B_0 - \beta)^{k-1}x}{\|(B_0 - \beta)^{k-1}x\|},\frac{(B_0 - \beta)^{k-1}x}{\|(B_0 - \beta)^{k-1}x\|}\right]\le\frac{\|(B_0 - \beta)^{2k-2}x\|}{\|(B_0 - \beta)^{k-1}x\|^2}.
$$
Owing to $k\le 2k - 2\le 2k_0$, relation \eqref{ImEst} holds for $j = 2k - 2$ by assumption, and thus
$$
\left\|(B_0 - \beta)^{k-1}x\right\|\le\sqrt{\delta^{-1}\|(B_0 - \beta)^{2k-2}x\|}\le\sqrt{\frac{\delta^{2k-4}\veps^{2k-2}}{2^{2k-2}}} = \frac{\delta^{k-2}\veps^{k-1}}{2^{k-1}}
$$
follows. But this contradicts \eqref{ImDoof}. Hence, \eqref{ImEst} holds for $j=k-1$, and, by induction, for $j=1$. Hence,
$$
\|(N - (\alpha+i\beta))x\|\le\|(A_0 - \alpha)x\| + \|(B_0 - \beta)x\|\le\veps.
$$
By \eqref{e:epsdel}, this yields $[x,x]\ge\delta$ and \eqref{e:pp} is proved.

Due to Theorem \ref{t:lmm} the operator $A_0\in L(\calL_\Delta)$ has a local spectral function $E_\Delta$ of positive type on $\Delta_1$, and the subspace
$$
\calH_{\Delta_1\times\Delta} := E_\Delta(\Delta_1)\calL_\Delta
$$
is the maximal spectral subspace of $A_0$ corresponding to $\Delta_1$. Moreover, $\calH_{\Delta_1\times\Delta}$ is a Hilbert space with respect to the inner product $\product$. Since $\calH_{\Delta_1\times\Delta}$ is invariant with respect to both $N$ and $N^+$, the $\product$-orthogonal complement $$\calH_{\Delta_1\times\Delta}^\gperp =
\left\{y\in \calH\, : \, [y,x]=0 \mbox{ for all } x\in
\calH_{\Delta_1\times\Delta} \right\}$$
is also $N$- and $N^+$-invariant and
$(\calH_{\Delta_1\times\Delta}^\gperp, \product)$ is a Krein space,
see e.g.\ \cite{L82}. Moreover, we have
$$
\left(N|\calH_{\Delta_1\times\Delta}\right)^+ = N^+|\calH_{\Delta_1\times\Delta}.
$$

{\bf 2. } Let $Q := \Delta_1\times\Delta\subset\calU$ be a rectangle as in step 1. By $Q^i$ ($\Delta^i$) we denote the complex (real, respectively) interior of the set $Q$ ($\Delta$, respectively). In this step of the proof we shall show that the subspaces $\calH_Q$ and $\calH_Q^\gperp$, defined in the first step, have the following properties.
\begin{enumerate}
\item[(a)] $\sigma\left(N|\calH_Q\right)\subset\sigma(N)\cap Q$.
\item[(b)] If $\calM\subset\calH$ is a subspace which is both $N$- and $N^+$-invariant such that
$$
\sigma(N|\calM)\subset Q,
$$
then $\calM\subset\calH_Q$.
\item[(c)] If $\calH_Q = \{0\}$ then $Q^i\subset\rho(N)$.
\item[(d)] $\sigma\left(N|\calH_Q^\gperp\right)\subset\ol{\sigma(N)\setminus Q}$.
\item[(e)] If the bounded operator $B$ commutes with $N$ then both $\calH_Q$ and  $\calH_Q^\gperp$ are $B$-invariant.
\item[(f)] $\calH_Q$ is the maximal spectral subspace of $N$ corresponding to $Q$.
\end{enumerate}
By Lemma \ref{l:invariant_spec} and \eqref{e:basic} we have
$$
\sigma(\Im (N|\calH_Q)) = \sigma(B_0|\calH_Q)\subset\sigma(B_0)\subset\Delta.
$$
In addition,
$$
\sigma(\Re(N|\calH_Q)) = \sigma(A_0|\calH_Q)\subset\Delta_1.
$$
From this and Lemma \ref{l:polynom} we obtain
$$
\sigma(N|\calH_Q)\subset Q.
$$
Since the spectrum of a normal operator in a Hilbert space coincides with its approximate point spectrum, (a) follows.

Let $\calM\subset\calH$ be a subspace as in (b). By Lemma \ref{l:polynom} we have
$$
\sigma(\Im N|\calM)\subset\Delta
\quad\;\text{ and }\;\quad
\sigma(\Re N|\calM)\subset\Delta_1.
$$
As $\calL_\Delta$ is the maximal spectral subspace of $\Im N$ corresponding to $\Delta$, we conclude from the first relation that $\calM\subset\calL_\Delta$. From the second relation we obtain (b) since $\calH_Q$ is the maximal spectral subspace of $\Re N|\calL_\Delta$ corresponding to the interval $\Delta_1$, cf.\
Theorem \ref{t:lmm}.

Let us prove (c). By definition of $\calH_Q$ it follows from $\calH_Q = \{0\}$ that $\Delta_1^i\subset\rho(\Re N|\calL_\Delta)$. Hence, by Lemma \ref{l:polynom} we have
\begin{equation}\label{e:fast_fast_c}
\Delta_1^i\times\R\;\subset\;\rho(N|\calL_\Delta).
\end{equation}
Let $J$ be a closed interval which contains $\sigma(\Im N)$ and let $\delta_1$ and $\delta_2$ be the two (closed) components of $J\setminus\Delta^i$. By $\calL_{\delta_1}$ and $\calL_{\delta_2}$ denote the maximal spectral subspaces of $\Im N$ corresponding to the intervals $\delta_1$ and $\delta_2$, respectively. Set
$$
\calL_{\Delta^c} := \calL_{\delta_1} \;\dotplus\; \calL_{\delta_2}.
$$
Obviously, we have
\begin{equation}\label{e:cc_sigma}
\sigma(\Im N|\calL_{\Delta^c})\,\subset\,\delta_1\cup\delta_2.
\end{equation}
And by \cite[Chapter II, Theorem 4]{LM} and \cite[Chapter I, \textsection 4.4]{LM} we have
\begin{equation}\label{e:cc_sum}
\calH = \calL_\Delta \;+\;\calL_{\Delta^c}.
\end{equation}
It is an immediate consequence of (b) that $\ker(N - \la)\subset\calH_Q = \{0\}$ for $\la\in Q^i$. Hence, due to \eqref{e:fast_fast_c} and \eqref{e:cc_sum}, it remains to show that $Q^i\subset\rho(N|\calL_{\Delta^c})$. But this follows directly from \eqref{e:cc_sigma} and Lemma \ref{l:polynom}.

Set $\wt N := N|\calH_Q^\gperp$. In order to show (d) we prove
\begin{equation}\label{e:anstatt}
\C\setminus\left(\ol{\sigma(N)\setminus Q}\right)\,\subset\,\rho(\wt N).
\end{equation}
Since
$$
\C\setminus\left(\ol{\sigma(N)\setminus Q}\right) = \rho(N)\cup Q^i\cup \{\la\in\partial Q : \nexists (\la_n)\subset\sigma(N)\setminus Q \mbox{ with } \lim_{n\to\infty}\,\la_n = \la\},
$$
and $\rho(N)\subset\rho(\wt N)$ by Lemma \ref{l:only_sap}, it suffices to show
\begin{equation}\label{uuzi}
 Q^i\cup \{\la\in\partial Q : \nexists (\la_n)\subset\sigma(N)\setminus Q
\mbox{ with }
 \lim_{n\to\infty}\,\la_n = \la\}\,\subset\,\rho(\wt N).
\end{equation}
Let $\la$ be a point contained in the set on the left hand side of this relation. Then there exists a compact rectangle $R = \Delta_1'\times\Delta'\subset\calU$ with $\la\in R^i$, $\ell(\Delta') < \tau$ and
$$
\sigma(N)\cap R\,\subset\, Q.
$$
Observe that the normal operator $\wt N$ in the Krein space $\calH_Q^\gperp$ satisfies the conditions of Theorem \ref{t:main}. In particular, relation \eqref{e:epsdel} holds with the same values $\veps$ and $\delta$ and with $N$ replaced by $\wt N$. Hence, there exists a subspace $\wt\calH_R$ of $\calH_Q^\gperp$ which is $N$- and $N^+$-invariant and has the properties
\begin{enumerate}
\item[($\wt a$)] $\sigma(\wt N|\wt\calH_R)\,\subset\,R\cap\sigma(\wt N)$,
\item[($\wt c$)] $\wt\calH_R = \{0\}\;\Longrightarrow\;R^i\subset\rho(\wt N)$.
\end{enumerate}
By virtue of (b) we conclude from ($\wt a$) and Lemma \ref{l:only_sap} that $\wt\calH_R\subset\calH_Q$. But since $\wt\calH_R$ is also a subspace of $\calH_Q^\gperp$, we have $\wt\calH_R = \{0\}$ which by ($\wt c$) implies $R^i\subset\rho(\wt N)$. Hence, $\la\in\rho(\wt N)$ and therefore \eqref{uuzi}
holds.

In order to prove (e) let $Q_n = \Delta_n'\times\Delta_n''\subset\calU$ be closed rectangles such that $\ell(\Delta_1'') < \tau$, $Q\subset Q_n^i$ for all $n\in\N$ and
$$
Q_1\supset Q_2\supset\dots\quad\;\text{ and }\;\quad Q = \bigcap_{n=1}^\infty\,Q_n.
$$
From (a) and (b) it follows that $\calH_Q\subset\bigcap_{n=1}^\infty\,\calH_{Q_n}$. Now, it is not difficult to see that $\C\setminus Q\subset\rho(N|\bigcap_{n=1}^\infty\,\calH_{Q_n})$, and (b) gives
\begin{equation}\label{e:subsp_cap}
\calH_Q = \bigcap_{n=1}^\infty\,\calH_{Q_n}.
\end{equation}
Let $E(Q)$ and $E(Q_n)$ be the $\product$-orthogonal projections onto the Hilbert spaces $\calH_Q$ and $\calH_{Q_n}$, respectively. As these spaces are invariant with respect to both $N$ and $N^+$, the projections commute with $N$. Let $B$ be a bounded operator which commutes with $N$ and let
 $B_Q\in L(\calH_Q,\calH)$ be the restriction of $B$ to $\calH_Q$. We obtain
\begin{align*}
\left(N|\calH_{Q_n}^\gperp\right)[(I - E(Q_n))B_Q]
&= (I - E(Q_n))NB_Q = [(I - E(Q_n))B_Q]\left(N|\calH_Q\right).
\end{align*}
The spectra of $N|\calH_{Q_n}^\gperp$ and $N|\calH_Q$ are disjoint by (a) and (d), and Rosenblum's Corollary (Theorem \ref{l:rosenblum}) implies $(I - E(Q_n))B_Q = 0$, i.e.\ $B\calH_Q\subset\calH_{Q_n}$ for every $n\in\N$. By \eqref{e:subsp_cap} this yields $B\calH_Q\subset\calH_Q$. Similarly, one shows that $B\calH_{Q_n}^\gperp\subset\calH_Q^\gperp$ for all $n\in\N$. From
$$
{\rm c.l.s.}\,\left\{\calH_{Q_n}^\gperp : n\in\N\right\}^\gperp = \bigcap_{n=1}^\infty\,\calH_{Q_n}
$$
and \eqref{e:subsp_cap} we deduce
$$
\calH_Q^\gperp = {\rm c.l.s.}\,\left\{\calH_{Q_n}^\gperp : n\in\N\right\}.
$$
Hence, for $x\in\calH_Q^\gperp$ there exists a sequence $(x_k)$ with each $x_k$ in some $\calH_{Q_{n_k}}^\gperp$ such that $x_k\to x$ as $k\to\infty$. Since $Bx_k\in\calH_Q^\gperp$ and $Bx_k\to Bx$ as $k\to\infty$, we conclude $Bx\in\calH_Q^\gperp$.

After all which has been proved above, for (f) we only have to show that every $N$-invariant subspace $\calM\subset\calH$ with $\sigma(N|\calM)\subset Q$ is a subspace of $\calH_Q$. Let $\calM$ be such a subspace. Then let $(Q_n)$ be a sequence of rectangles as in the proof of (e). From
$$
\left(N|\calH_{Q_n}^\gperp\right)\,[(I - E(Q_n))|\calM] = [(I - E(Q_n))|\calM]\,(N|\calM)
$$
and Rosenblum's Corollary we conclude $(I - E(Q_n))\calM = \{0\}$. Therefore, $\calM\subset\calH_{Q_n}$ for every $n\in\N$ and $\calM\subset\calH_Q$ follows from \eqref{e:subsp_cap}.

\
\\
\indent
{\bf 3. } In this step we complete the proof. Let $Q_1 = [a,b]\times\Delta_1
\subset \calU$ and $Q_2 = [a,b]\times\Delta_2\subset \calU$ such that $\ell(\Delta_j) < \tau$ for $j=1,2$ and assume that $\Delta_1$ and $\Delta_2$ have one common endpoint. Then $Q := Q_1\cup Q_2 = [a,b]\times (\Delta_1\cup\Delta_2)$ is also a closed rectangle. Define
$$
\calH_Q := \calH_{Q_1} + \calH_{Q_2} = \calH_{Q_1}[\dotplus]\left(\calH_{Q_1}^\gperp\cap\calH_{Q_2}\right).
$$
This is obviously a Hilbert space (with respect to $\product$) which is both $N$- and $N^+$-invariant. Let us prove that the statements (a)--(f) from part 2 of this proof also hold for $\calH_Q$. In step 2 the statements (d)--(f) were proved only with the help of (a)--(c). Here, this can be done similarly. Hence, it is sufficient to prove only (a)--(c). By (a$_j$)--(c$_j$) denote the corresponding properties of $\calH_{Q_j}$, $j=1,2$. Statement (a) holds since $N|\calH_Q$ is a normal operator in the Hilbert space $(\calH_Q,\product)$ and
$$
\sigma(N|\calH_Q) = \sigma(N|\calH_{Q_1})\cup\sigma(N|\calH_{Q_1}^\gperp\cap\calH_{Q_2})\subset Q_1\cup\sigma(N|\calH_{Q_2})\subset Q_1\cup Q_2.
$$
For (b) let $\calM$ be a  $N$- and $N^+$-invariant subspace with
$
\sigma(N|\calM)\subset Q
$. Denote by
 $\calL_{\Delta_j}^\calM\subset\calM$ be the maximal spectral subspace of $\Im
N|\calM$ corresponding to $\Delta_j$, $j=1,2$. Then, by Lemmas \ref{l:polynom}
and \ref{l:invariant_spec},
$$
\sigma(N|\calL_{\Delta_j}^\calM)\subset(\R\times\Delta_j)\cap(\sigma(N|\calM)\cup\rho_b(N|\calM))\subset(\R\times\Delta_j)\cap Q = Q_j.
$$
From (b$_j$) we obtain $\calL_{\Delta_j}^\calM\subset\calH_{Q_j}$, $j=1,2$. And since $\calM = \calL_{\Delta_1}^\calM + \calL_{\Delta_2}^\calM$
(see \cite[Chapter II, Theorem 4 and Chapter I, \textsection4.4]{LM})
 we have $\calM\subset\calH_Q$.

Suppose that $\calH_Q = \{0\}$. Then $\calH_{Q_1} = \calH_{Q_2} = \{0\}$ and hence $Q_1^i\cup Q_2^i\subset\rho(N)$ by (c$_1$) and (c$_2$). Let $R = [a,b]\times [c_1,c_2]$, where $c_j$ is the center of $\Delta_j$, $j=1,2$. Then $c_2 - c_1 < \tau$. From $\sigma(N|\calH_R)\subset R\subset Q$ and (b) it follows that $\calH_R\subset\calH_Q = \{0\}$. Hence, $R^i\subset\rho(N)$ which shows (c).

Now it is clear that for $Q = [a,b]\times [c,d]$ we choose a partition $c = t_0 < t_1 < \dots < t_m = d$ of $[c,d]$ such that $t_{k+1} - t_k < \tau$, $k=0,\dots,m-1$, and define
$$
\calH_Q := \calH_{Q_1} + \dots + \calH_{Q_m},
$$
where $Q_k := [a,b]\times [t_{k-1},t_k]$, $k=1,\dots,m$. This subspace is then a Hilbert space with respect to the indefinite inner product $\product$ with the properties (a)--(f). Moreover, $\calH_Q$ is both $N$- and $N^+$-invariant. Hence, $N|\calH_Q$ is a normal operator in the Hilbert space $(\calH_Q,\product)$ and has therefore a spectral measure $E_Q$. By $E(Q)$ we denote the $\product$-orthogonal projection onto $\calH_Q$. It is now easy to see that
$$
E(\cdot) := E_Q(\cdot)E(Q)
$$
satisfies conditions (i)-(iii) from Definition \ref{locspecfct}. The remaining
conditions (iv)-(vi) follow from (e), (a) and (d), respectively.
Hence, $E$
is the local spectral function of positive type of $N$ on $Q$.
\end{proof}

\begin{remark}
It is clear that under the conditions of Theorem \ref{t:main} the operator $N$ possesses a spectral function of positive type on open sets $S$ of positive type. In order to define $E(W)$ for $W\in\mathcal B(S)$, cover $W$ with finitely many closed rectangles $Q_1,\ldots,Q_n\in\mathcal B(S)$ and define the spectral projection $E(Q)$ for $Q = Q_1\cup\ldots\cup Q_n$ similarly as in the last part of the proof of Theorem \ref{t:main}. Then $N|E(Q)\calH$ is a normal operator in the Hilbert space $E(Q)\calH$ with spectrum in $Q$ and spectral measure $E_Q$, and $E(W)$ can be defined via $E_Q(W)E(Q)$.
\end{remark}

\begin{remark}
The statement of Theorem \ref{t:main} also holds if the growth condition on the imaginary part of $N$ is replaced by the (local) definitizability of $\Im N$ in the sense of P.\ Jonas (cf.\ \cite{J}) over a complex neighborhood of $[c,d]$.
\end{remark}

\section{Spectral sets of definite type}
In this section we show that Theorem \ref{t:main} also holds in the situation when the real part of $N$ is allowed to have nonreal spectrum but the set of definite type with respect to $N$ is a spectral set.

\begin{lemma}\label{Vladimir}
Let $N$ be a normal operator in the Krein space $(\calH,\product)$ and
let $\sigma$ be a spectral set of $N$ with
\begin{equation}\label{sapinspp}
\sigma\cap\sigma_{ap}(N) \subset \sigma_{++}(N).
\end{equation}
Then the Riesz-Dunford projection $Q$ of $N$ corresponding to $\sigma$ is selfadjoint in the Krein space $({\mathcal H},\product)$ and the corresponding spectral subspace $Q\calH$ is invariant with respect to both $N$ and $N^+$. Moreover, we have
$$
\sigma_{ap}(N|Q\calH) \subset \sigma_{++}(N|Q\calH).
$$
\end{lemma}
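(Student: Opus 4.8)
The plan is to reduce the assertion $Q^+ = Q$ to the nondegeneracy of the subspace $Q\calH$ by means of the general identity $\ker Q^+ = (Q\calH)^\gperp$, and then to force that nondegeneracy from \eqref{sapinspp}: a nonzero isotropic $N$-invariant subspace would carry a normed approximate eigensequence consisting of isotropic vectors, which is incompatible with spectrum of positive type.

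First I would record the routine facts. Since $\sigma$ is closed in the compact set $\sigma(N)$ it is compact, so $Q$ is the ordinary Riesz--Dunford projection; in particular $Q\calH$ is $N$-invariant and $\sigma(N|Q\calH) = \sigma$. As $N$ is normal, $N^+$ commutes with $N$, hence with every resolvent of $N$ and therefore with $Q$; thus $Q\calH$ and $\ker Q$ are $N^+$-invariant, which is the invariance assertion. Taking Krein-space adjoints of $QN = NQ$ and of $QN^+ = N^+Q$ shows that $Q^+$ commutes with both $N$ and $N^+$, hence (being a contour integral of resolvents of $N$) also with $Q$; moreover $Q^+$ is idempotent because $Q$ is.

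The main step is $Q^+ = Q$. From $[Qx,y] = [x,Q^+y]$ and the nondegeneracy of $\product$ one obtains $\ker Q^+ = (Q\calH)^\gperp$. Hence $\calM := Q\calH\cap\ker Q^+ = Q\calH\cap(Q\calH)^\gperp$ is exactly the isotropic part of $Q\calH$, so $[x,x] = 0$ for every $x\in\calM$, and $\calM$ is $N$- and $N^+$-invariant because $Q$ and $Q^+$ commute with $N$ and $N^+$. Every normed approximate eigensequence of $N|\calM$ is also one for $N|Q\calH$ and for $N$, so
$$
\sap(N|\calM)\,\subset\,\sap(N|Q\calH)\cap\sap(N)\,\subset\,\sigma(N|Q\calH)\cap\sap(N)\,=\,\sigma\cap\sap(N)\,\subset\,\sigma_{++}(N).
$$
If $\calM\neq\{0\}$, then $\sap(N|\calM)\neq\emptyset$ by \eqref{e:incs}; choosing $\la$ there and a normed approximate eigensequence $(x_n)\subset\calM$, the relation $\la\in\sigma_{++}(N)$ forces $\liminf_{n}[x_n,x_n] > 0$, which contradicts $[x_n,x_n] = 0$. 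Hence $\calM = \{0\}$. Consequently, for $x = Qx'\in Q\calH$ the vector $(I - Q^+)x = Q(I - Q^+)x'$ lies in $Q\calH\cap\ker Q^+ = \{0\}$, so $Q^+Q = Q$; taking Krein-space adjoints and using $Q^{++} = Q$ gives $Q^+Q = Q^+$, whence $Q = Q^+$.

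It remains to observe that $Q\calH$ is now a nondegenerate subspace, hence a $G$-space with the norm inherited from $\calH$, so $\sigma_{++}(N|Q\calH)$ is meaningful; and for $\la\in\sap(N|Q\calH)$ we have $\la\in\sigma(N|Q\calH)\cap\sap(N) = \sigma\cap\sap(N)\subset\sigma_{++}(N)$, so every normed approximate eigensequence of $N|Q\calH$ corresponding to $\la$ has $\liminf_{n}[x_n,x_n] > 0$; that is, $\la\in\sigma_{++}(N|Q\calH)$. The only genuinely delicate point is the passage through $\calM$: one has no a priori control over $\sigma(N|\calM)$, but none is needed — it is enough that $\calM\neq\{0\}$ makes $\sap(N|\calM)$ nonempty, and positive type is incompatible with an approximate eigensequence lying in the isotropic subspace.
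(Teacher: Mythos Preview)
Your proof is correct and follows essentially the same approach as the paper's: both arguments isolate the neutral subspace $\calM$ on which $Q$ and $Q^+$ disagree, observe that it is $N$-invariant with $\sap(N|\calM)\subset\sigma\cap\sap(N)\subset\sigma_{++}(N)$, and obtain $\calM=\{0\}$ from the incompatibility of neutrality with positive type, whence $Q^+Q=Q$ and $Q=Q^+$. The only cosmetic difference is that the paper introduces $\calM$ as $\ran(Q-Q^+Q)$ (noting that $Q^+$ is the Riesz--Dunford projection of $N^+$ for the conjugate set), whereas you reach the same subspace via the identity $\ker Q^+=(Q\calH)^\gperp$, i.e.\ as the isotropic part $Q\calH\cap(Q\calH)^\gperp$; the two descriptions coincide once $Q$ and $Q^+$ commute.
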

\begin{proof} Since $N$ is normal, $Q$ is also normal,
hence it commutes with $Q^+$. Moreover, $Q^+$ is the Riesz-Dunford
projection corresponding to $N^+$  and the set $\{\lambda :
\overline{\lambda} \in \mathcal U\}$, so $Q^+$ also commutes with
$N$. Thus, the projection $Q-Q^+Q$ projects on a subspace $\calM$
which is invariant with respect to $N$. This subspace is neutral.
Hence, from \eqref{sapinspp} it follows that $\sigma_{ap}(N|\calM) =
\emptyset$. This is only possible if $\calM = \{0\}$, and we
conclude
$$
Q=Q^+Q,
$$
that is, $Q$ is a selfadjoint projection. The last statement follows
from $\sigma_{ap}(N|Q\calH) = \sigma\cap\sigma_{ap}(N)$.
\end{proof}

\begin{theorem}\label{t:sset_main}
Let $N$ be a normal operator in the Krein space $(\calH,\product)$.
Let $\sigma$ be a spectral set of $N$ with
$$
\sigma\cap\sigma_{ap}(N)\subset\sigma_{++}(N),
$$
and let $Q$ be the Riesz-Dunford projection corresponding to
$\sigma$ and $N$. Assume that
$$
\sigma(\Im N|Q\calH)\subset\R\quad \left(\text{or }\,\sigma(\Re
N|Q\calH)\subset\R\right)
$$
and that the growth of the resolvent of $\Im N|Q\calH$ {\rm (}$\Re
N|Q\calH$, respectively{\rm )} is of finite order. Then the spectral
subspace $Q\calH$ equipped with the inner product $\product$ is a
Hilbert space. Hence, the restriction $N|Q\calH$ is a normal
operator in the Hilbert space $(Q\mathcal H,\product)$ and,
therefore, possesses a spectral function.
\end{theorem}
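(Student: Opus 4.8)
The plan is to reduce, via Lemma \ref{Vladimir}, to the case where $N_Q:=N|Q\calH$ is a normal operator on the Krein space $(Q\calH,\product)$ — note that $Q\calH$ is regular, hence a Krein space, because $Q$ is a selfadjoint projection — with $\sigma_{ap}(N_Q)\subset\sigma_{++}(N_Q)$, and where (after replacing $N$ by $iN$ and $\sigma$ by $i\sigma$ if the second alternative of the hypothesis holds) one has $\sigma(\Im N_Q)\subset\R$ and the resolvent of $\Im N_Q$ of finite order. Here $\Re N_Q=(\Re N)|Q\calH$, $\Im N_Q=(\Im N)|Q\calH$, and $(N_Q)^+=N^+|Q\calH$. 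If $Q\calH=\{0\}$ there is nothing to prove. Once $(Q\calH,\product)$ is shown to be a Hilbert space, the remaining assertions are immediate: $N_Q$ is then a normal operator on a Hilbert space and the spectral theorem yields its spectral function.

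First I would establish the analogue of Lemma \ref{l:only_sap} that does \emph{not} require $\sigma(\Re N_Q)\subset\R$, namely $\sigma(N_Q)=\sigma_{ap}(N_Q)$. Suppose $\la\in\sigma(N_Q)\setminus\sigma_{ap}(N_Q)$; then $N_Q-\la$ is bounded below with closed proper range, so $\calL:=\ker(N_Q^+-\bar\la)=(\ran(N_Q-\la))^\gperp$ is a nonzero subspace, invariant under both $N_Q$ and $N_Q^+$, with $N_Q^+|\calL=\bar\la I$. Its isotropic part $\calL\cap\calL^\gperp$ is a neutral, $N_Q$-invariant subspace; if it were nonzero, a normed approximate eigensequence of $N_Q$ supported on it would be neutral, contradicting $\sigma_{ap}(N_Q)\subset\sigma_{++}(N_Q)$; hence $\calL$ is non-degenerate. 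Writing $C:=\Im N_Q|\calL$ one has $\Re N_Q|\calL=\bar\la I+iC$, and evaluating the $\product$-selfadjointness relation $[(\bar\la I+iC)x,y]=[x,(\bar\la I+iC)y]$ on $\calL$ gives $[x,(C-\Im\la)y]=0$ for all $x,y\in\calL$, so $C=(\Im\la)I$ by non-degeneracy; then $N_Q|\calL=\la I$ and $\la\in\sigma_p(N_Q)\subset\sigma_{ap}(N_Q)$, a contradiction. Thus $\sigma(N_Q)$ is a compact set of positive type with respect to $N_Q$.

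Next I would prove $\sigma(\Re N_Q)\subset\R$, which then places $N_Q$ under the hypotheses of Theorem \ref{t:main}. If $\alpha_0\in\sigma(\Re N_Q)$ with $\Im\alpha_0\neq0$, then — replacing $\alpha_0$ by $\bar\alpha_0\in\sigma_p(\Re N_Q)$ if $\alpha_0\notin\sigma_{ap}(\Re N_Q)$, using that $\Re N_Q$ is $\product$-selfadjoint on the Krein space $Q\calH$ — there is a normed approximate eigensequence $(x_n)$ of $\Re N_Q$ at $\alpha_0$, and since $\alpha_0\notin\R$ we get $[x_n,x_n]\to0$. Using the maximal spectral subspaces of $\Im N_Q$ associated with a partition into intervals of length $<\tau$ (the constant of Corollary \ref{c:power} applied to $\Im N_Q$; here the finite-order hypothesis supplies these subspaces and bounded projections commuting with $\Re N_Q$ and $\Im N_Q$), I localize $(x_n)$ into one fixed such subspace $\calL_\Delta$, obtaining $(y_n)\subset\calL_\Delta$ with $\|y_n\|=1$, $(\Re N_Q-\alpha_0)y_n\to0$, and still $[y_n,y_n]\to0$ (now from $\product$-symmetry of $\Re N_Q|\calL_\Delta$). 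Picking $\beta\in\Delta\cap\sigma(\Im N_Q|\calL_\Delta)$ and running the estimates of Step 1 of the proof of Theorem \ref{t:main} \emph{verbatim} — they never use that the real-part eigenvalue is real — shows $(y_n)$ to be a normed approximate eigensequence of $N_Q$ at $\alpha_0+i\beta$, so $\liminf[y_n,y_n]>0$, a contradiction. (The neighbourhood $\calU$ and the numbers $\veps,\delta$ are obtained by applying Lemma \ref{l:compact} to the compact set $\sigma(N_Q)$ and enlarging $\calU$ to a large open rectangle, on whose part away from $\sigma(N_Q)$ the operator $N_Q-\la$ is uniformly boundedly invertible, so \eqref{e:epsdel} survives after shrinking $\veps$.)

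Finally, $N_Q$ satisfies the hypotheses of Theorem \ref{t:main}, and since $\sigma(N_Q)$ is compact and of positive type it lies in the interior of a closed rectangle $R$, which is therefore of positive type with respect to $N_Q$. Theorem \ref{t:main} provides a local spectral function $E$ of positive type on $R$; property (vi) of Definition \ref{locspecfct} gives $\sigma(N_Q|(I-E(R))Q\calH)\subset\overline{\sigma(N_Q)\setminus R}=\emptyset$, so $(I-E(R))Q\calH=\{0\}$, i.e.\ $E(R)=I$, and by property (i) the space $(Q\calH,\product)=(E(R)Q\calH,\product)$ is a Hilbert space. I expect the main obstacle to be Step 3: establishing $\sigma(\Re N_Q)\subset\R$ without a finite-order hypothesis on $\Re N_Q$, and in particular justifying the localization along the spectral subspaces of $\Im N_Q$ and verifying that the estimates of Step 1 of Theorem \ref{t:main} carry over unchanged to a non-real real-part eigenvalue; the reduction via Lemma \ref{Vladimir} and the identity $\sigma(N_Q)=\sigma_{ap}(N_Q)$ are comparatively routine.
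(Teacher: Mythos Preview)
Your route is genuinely different from the paper's. After the common reduction via Lemma~\ref{Vladimir}, you aim to verify the hypotheses of Theorem~\ref{t:main} (in particular $\sigma(\Re N_Q)\subset\R$) and then invoke that theorem on a large rectangle containing $\sigma(N_Q)$. The paper never reduces to Theorem~\ref{t:main}. Instead, for each $b\in\sigma(\Im N_Q)$ and a short interval $\Delta\ni b$, it shows $\sap(\Re N_Q|\calL_\Delta)\subset\spp(\Re N_Q|\calL_\Delta)$ (the same Step~1 estimates you cite, but applied \emph{inside} $\calL_\Delta$ from the outset), concludes $\sigma(\Re N_Q|\calL_\Delta)\subset\R$, and then uses \cite[Theorem~3.1]{LMM} to get that $(\calL_\Delta,\product)$ is a Hilbert space. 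From this it deduces $b\in\rho(\Im N_Q|\calL_\Delta^\gperp)$ and hence $b\in\spp(\Im N_Q)$; since $b$ was arbitrary, $\sigma(\Im N_Q)=\spp(\Im N_Q)$, and a second application of \cite[Theorem~3.1]{LMM} (now to the selfadjoint operator $\Im N_Q$) finishes. Your Step~2, while correct, is not needed in either approach.

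There is a concrete gap in your Step~3: the finite-order resolvent hypothesis does \emph{not} supply bounded \emph{projections} onto the maximal spectral subspaces $\calL_{\Delta_j}$ of $\Im N_Q$; the Lyubich--Matsaev decomposition $\calH=\calL_\Delta+\calL_{\Delta^c}$ is in general neither direct nor equipped with bounded complementary projections. What the hypothesis does give (via the $C^\infty$ functional calculus for generalized scalar operators, cf.\ \cite{CF}) is bounded operators $\phi_j(\Im N_Q)$ for a smooth partition of unity $\{\phi_j\}$ subordinate to a cover by intervals of length $<\tau$; these commute with $\Re N_Q$, have range in $\calL_{\operatorname{supp}\phi_j}$, and sum to the identity, so a pigeonhole argument still localizes $(x_n)$. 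With this correction your argument goes through: you are right that the Step~1 estimates never use that $\alpha$ is real, and your extension of \eqref{e:epsdel} to all of $\C$ by shrinking $\veps$ is legitimate. The paper's approach is more economical precisely because it never has to transport an approximate eigensequence from the full space into a single $\calL_\Delta$; it works there from the start and then passes to the selfadjoint operator $\Im N_Q$ to conclude globally.
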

\begin{proof}
By Lemma \ref{Vladimir} the space $(Q\calH,\product)$ is a
Krein space and $Q\calH$ is $N^+$-invariant.
Hence $(N|Q\calH)^+ = N^+|Q\calH$ and $N|Q\calH$ is
  normal in $Q\calH$. 
Therefore it is no restriction to assume $\sigma_{ap}(N) =
\sigma_{++}(N)$, $\sigma(\Im N)\subset\R$ and that the resolvent of
$\Im N$ is of finite order $k_0$ for some $k_0\in\N$. For each
compact interval $\Delta$ denote the maximal spectral subspace
corresponding to $\Im N$ and $\Delta$ (which exists due to
\cite{LM}) by $\calL_\Delta$.

It is a consequence of Lemma \ref{l:compact} that there exist
$\veps,\delta > 0$ with $\delta<1$ such that for all $\mu\in K$,
$$
K := \{\la + ib : \la\in\sigma(\Re N),\,b\in\sigma(\Im N)\},
$$
and all $x\in\calH$ we have
\begin{equation}\label{epsdelta}
\|(N - \mu)x\|\le\veps\|x\|\quad\Longrightarrow\quad
[x,x]\ge\delta\|x\|^2.
\end{equation}

Let $b\in\sigma(\Im N)$. From Corollary \ref{c:power} it follows that
there exists a compact interval $\Delta$ with center $b$ such that
\begin{equation}\label{NormEst}
\left\|(\Im N|\calL_\Delta -
b)^k\right\|\,\le\,\frac{\delta^{k-1}\veps^k}{2^k} \quad\text{for
all }\,k = k_0,k_0+1,\dots,2k_0,
\end{equation}
where $k_0$ is the order of growth of the resolvent of $\Im N$.

Since the subspace $\calL_\Delta$ is hyperinvariant with respect to $\Im N$, it is $\Re N$-invariant. The operator $\Re N|\calL_\Delta$ is a bounded operator in $\calL_\Delta$ which is  $\product$-selfadjoint in the sense that
$$
[(\Re N) x,y] = [x,(\Re N) y]\quad\text{for all}\,x,y\in\calL_\Delta,
$$
cf.\ \eqref{Gsa}.
We define
$$
\wt\veps :=
\min\left\{\frac{\veps}{2},\,\frac{\delta^{j-2}\veps^j}{2^j(\|\Im
N\| + r(\Im N))^{j-1}} : j = 2,\dots,k_0\right\}.
$$
In a similar way as  in step 1 of the proof of Theorem \ref{t:main}  
it is shown here that from $\|(\Re N -\la)x\|\le\wt\veps\|x\|$ for $x\in\calL_\Delta$, $\|x\|=1$ and $\la\in\sigma(\Re N|\calL_\Delta)$ it follows that $\|(\Im N - b)x\|\le\frac\veps 2 \|x\|$ and thus
$$
\|(N - (\la+ib))x\| \le \|(\Re N - \la)x\| + \|(\Im N - b)x\| \le \veps.
$$
Thus, with \eqref{epsdelta}, we obtain
$$
\sap(\Re N|\calL_\Delta)\;\subset\;\spp(\Re N|\calL_\Delta).
$$
Since $\spp(\Re N|\calL_\Delta)\subset\R$ (see Remark \ref{r:spp_real}) we conclude that $\C\setminus\R\subset
\C \setminus \sap(\Re N|\calL_\Delta)$. But as $\Re N|\calL_\Delta$ is bounded we even have $\C\setminus\R\subset\rho(\Re N|\calL_\Delta)$ and thus
$$
\sigma(\Re N|\calL_\Delta) = \sap(\Re N|\calL_\Delta)=\spp(\Re N|\calL_\Delta).
$$
It is now a consequence of \cite[Theorem 3.1]{LMM} that $(\calL_\Delta,\product)$ is a Hilbert space. It is easily seen that also the subspace $\calL_\Delta^\gperp$ is invariant with respect to $\Im N$. Consider the operator $A := \Im N|\calL_\Delta^\gperp$. If $\Delta_1$ is a compact interval which is completely contained in the inner of $\Delta$, then by \cite{LM} there exists a spectral subspace $\calL_{\Delta_1}\subset\calL_\Delta^\gperp$ of $A$ such that $\sigma(A|\calL_{\Delta_1})\subset\Delta_1$. But as this implies $\sigma(\Im N|\calL_{\Delta_1})\subset\Delta$ and $\calL_\Delta$ is a {\it maximal} spectral subspace, we obtain $\calL_{\Delta_1}\subset\calL_\Delta$ and thus $\calL_{\Delta_1}\subset\calL_\Delta\cap\calL_\Delta^\gperp = \{0\}$. Hence, $b\in\rho(\Im N|\calL_\Delta^\gperp)$ follows.

We are now ready to prove $b\in\sigma_{++}(\Im N)$. Let
$(x_n)\subset\calH$ be a sequence with $\|x_n\| = 1$, $n\in\N$, and
$(\Im N - b)x_n\to 0$ as $n\to\infty$. Write
$$
x_n = u_n + v_n\quad\text{ with }\quad
u_n\in\calL_\Delta,\,v_n\in\calL_\Delta^\gperp.
$$
From $(\Im N - b)x_n\to 0$ it follows that also $(\Im N - b)v_n\to 0$,
and $b\in\rho(\Im N | \calL_\Delta^\gperp)$ implies $v_n\to 0$ as
$n\to\infty$. From the fact that $(\calL_\Delta,\product)$ is a Hilbert
space we conclude
$$
\limsup_{n\to\infty}\,[x_n,x_n] = \limsup_{n\to\infty} \, \left(
[u_n,u_n] + [v_n,v_n]\right) = \limsup_{n\to\infty}\,[u_n,u_n] > 0.
$$
Since $b\in\sigma(\Im N)$ was arbitrary, we have
$\sigma(\Im N) = \sigma_{++}(\Im N)$, and it follows from, e.g.,
\cite[Theorem 3.1]{LMM} that $(\calH,\product)$ is a Hilbert space.
\end{proof}

In \cite{B98} a bounded normal operator $N$ in a Krein space is called {\it strongly stable} if there exists a fundamental decomposition \eqref{funddecomp}
such that $\calH_+$ and $\calH_-$ are invariant subspaces with respect to $N$
with $\sigma(N|\calH_+)\cap\sigma(N|\calH_-)=\emptyset$.
The following Theorem  \ref{t:Nuu} provides a new characterization of strongly stable normal operators
in Krein spaces. We say that an operator $T\in L(\calH)$ is similar to a selfadjoint (normal) operator in a Hilbert space if there exists a Hilbert space scalar product $\hproduct$ on $\calH$ which induces the topology of $(\calH,\product)$ such that $N$ is selfadjoint (normal, respectively) in the Hilbert space $(\calH,\hproduct)$.

\begin{theorem}\label{t:Nuu}
A normal operator $N$ in the Krein space $(\calH,\product)$ is strongly stable if and only if
\begin{equation}\label{Coni}
\sigma(N) = \sigma_{++}(N) \cup \sigma_{--}(N),
\end{equation}
\begin{equation}\label{Coni2}
\sigma(\Im N)\subset\R\quad \left(\text{or }\,\sigma(\Re
N)\subset\R\right)
\end{equation}
and  
\begin{equation}\label{add-str}\mbox{the growth of the resolvent of }\Im N \ {\rm (}\Re N\mbox{,
respectively}{\rm )}\ \mbox{ is of finite order.}\end{equation} In particular, in this case,  $N$ is similar to a
normal operator in a Hilbert space.
\end{theorem}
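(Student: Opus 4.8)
The plan is to establish the two implications of the equivalence separately, with the ``if'' direction being essentially an application of Theorem~\ref{t:sset_main} together with the Riesz--Dunford calculus, and the ``only if'' direction being a fairly direct unpacking of the definition of strong stability.

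For the ``if'' direction, assume \eqref{Coni}, \eqref{Coni2} and \eqref{add-str} hold; say $\sigma(\Im N)\subset\R$ with resolvent growth of finite order (the other case being symmetric, interchanging the roles of $\Re N$ and $\Im N$). First I would note that $\sigma_{++}(N)$ and $\sigma_{--}(N)$ are disjoint, and in fact have positive distance from each other: a point lying in the closure of both would be an approximate eigenvalue (by \eqref{e:incs} it would be a boundary point of $\sigma(N)$, or we argue via approximate eigensequences directly) with an approximate eigensequence $(x_n)$ for which $[x_n,x_n]$ cannot simultaneously be bounded below by a positive constant and above by a negative one. Hence $\sigma(N)$ decomposes as a disjoint union of two compact (relatively clopen) spectral sets $\sigma_+ := \sigma_{++}(N)$ and $\sigma_- := \sigma_{--}(N)$. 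Let $Q_\pm$ be the corresponding Riesz--Dunford projections, so $I = Q_+ + Q_-$ and $Q_+Q_- = 0$. Apply Lemma~\ref{Vladimir} to $\sigma_+$: since $\sigma_+\cap\sigma_{ap}(N)\subset\sigma_{++}(N)$ trivially, $Q_+$ is $\product$-selfadjoint and $Q_+\calH$ is $N$- and $N^+$-invariant; applying Lemma~\ref{Vladimir} to $-N$ (whose points of positive type are the points $-\lambda$ with $\lambda\in\sigma_{--}(N)$, since replacing $N$ by $-N$ replaces $[x,x]$ by $[x,x]$ but... ) — more cleanly, I would simply run the argument of Lemma~\ref{Vladimir} verbatim with ``positive type'' replaced by ``negative type'' to get that $Q_-$ is also $\product$-selfadjoint with $Q_-\calH$ doubly invariant. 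Now $\Im(N|Q_+\calH) = (\Im N)|Q_+\calH$ has real spectrum by Lemma~\ref{l:invariant_spec}, and its resolvent growth is still of finite order (restriction to an invariant subspace only shrinks the norm of the resolvent, as in Corollary~\ref{c:power}); the same for $Q_-\calH$. Thus Theorem~\ref{t:sset_main} applies to both $N|Q_+\calH$ and $N|Q_-\calH$, giving that $(Q_+\calH,\product)$ is a Hilbert space and $(Q_-\calH,-\product)$ is a Hilbert space (for the latter, apply Theorem~\ref{t:sset_main} to the normal operator $N|Q_-\calH$ in the Krein space $(Q_-\calH,-\product)$, in which $\sigma_{ap} = \sigma_{++}$). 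Since $Q_\pm$ are $\product$-selfadjoint complementary projections, $\calH = Q_+\calH \,\dotplus\, Q_-\calH$ is a $\product$-orthogonal direct sum, so setting $\calH_+ := Q_+\calH$, $\calH_- := Q_-\calH$ exhibits a fundamental decomposition with $\calH_\pm$ both $N$-invariant and $\sigma(N|\calH_+) = \sigma_+$ disjoint from $\sigma(N|\calH_-) = \sigma_-$. Hence $N$ is strongly stable. The similarity statement then follows at once: the scalar product $(x,y) := [Q_+x,Q_+y] - [Q_-x,Q_-y]$ is a Hilbert space inner product inducing the topology of $(\calH,\product)$ in which $N$ is normal (because $\calH_\pm$ are $N$- and $N^+$-invariant and $N$ is normal in each of the Hilbert spaces $(\calH_+,\product)$, $(\calH_-,-\product)$).

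For the ``only if'' direction, suppose $N$ is strongly stable, with fundamental decomposition $\calH = \calH_+\,\dotplus\,\calH_-$, both summands $N$-invariant and $\sigma(N|\calH_+)\cap\sigma(N|\calH_-)=\emptyset$. The projections $P_\pm$ onto $\calH_\pm$ along the complement are $\product$-selfadjoint (this is what it means for the decomposition to be fundamental — the $\dotplus$ is $\product$-orthogonal), they commute with $N$, and by Rosenblum's Corollary (the disjoint-spectra hypothesis) they commute with $N^+$ as well — indeed $P_+ N^+ - N^+ P_+$ intertwines $N^+|\calH_+$ and $N^+|\calH_-$ acting on... more directly, $(I-P_+)N^+P_+$ maps $\calH_+\to\calH_-$ and intertwines $N^+|\calH_+$ with $N^+|\calH_-$, whose spectra are $\overline{\sigma(N|\calH_+)}$ and $\overline{\sigma(N|\calH_-)}$ hence disjoint, forcing it to vanish; similarly $P_+N^+(I-P_+)=0$, so $\calH_\pm$ are $N^+$-invariant too. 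Therefore $N|\calH_+$ is normal in the Hilbert space $(\calH_+,\product)$ and $N|\calH_-$ is normal in the Hilbert space $(\calH_-,-\product)$. For a normal operator in a Hilbert space, $\sigma = \sigma_{ap}$ and every approximate eigensequence $(x_n)$ for $\lambda$ satisfies $[x_n,x_n]\to 1$ (resp. $\to -1$) after normalization, since the indefinite inner product restricted to $\calH_+$ is positive definite; hence $\sigma(N|\calH_+) = \sigma_{++}(N|\calH_+)\subset\sigma_{++}(N)$ and $\sigma(N|\calH_-) = \sigma_{--}(N|\calH_-)\subset\sigma_{--}(N)$, and since $\sigma(N) = \sigma(N|\calH_+)\cup\sigma(N|\calH_-)$ we get \eqref{Coni}. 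Finally, $\Im N = (\Im N|\calH_+)\,\dotplus\,(\Im N|\calH_-)$ with each summand selfadjoint in a Hilbert space, hence having real spectrum, so $\sigma(\Im N) = \sigma(\Im N|\calH_+)\cup\sigma(\Im N|\calH_-)\subset\R$, giving \eqref{Coni2}; and the resolvent of $\Im N$ is the direct sum of the resolvents of the two selfadjoint restrictions, each of which is bounded by $1/|\Im\lambda|$, so the growth is of finite order $1$, giving \eqref{add-str}.

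The main obstacle I anticipate is organizing the ``negative type'' bookkeeping cleanly: Theorem~\ref{t:sset_main} and Lemma~\ref{Vladimir} are stated only for positive type, so I will need to either restate their proofs with signs flipped or, preferably, observe that ``$\lambda$ is of negative type for $N$ in $(\calH,\product)$'' is literally the same as ``$\lambda$ is of positive type for $N$ in $(\calH,-\product)$'', and that $N$ remains normal in $(\calH,-\product)$ with the same adjoint, the same real and imaginary parts, and the same resolvent growth hypotheses — so the positive-type results apply verbatim on the other piece. A secondary point requiring a little care is verifying that strong stability really does make $\calH_\pm$ invariant under $N^+$ and not merely under $N$; this is exactly where the hypothesis $\sigma(N|\calH_+)\cap\sigma(N|\calH_-)=\emptyset$ is used, via Rosenblum's Corollary, and it should be spelled out. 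Everything else is routine decomposition of normal operators in Hilbert spaces and of resolvents across direct sums.
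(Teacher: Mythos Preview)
Your proposal is correct and follows essentially the same route as the paper: for the ``if'' direction you split $\sigma(N)$ into the two spectral sets $\sigma_{++}(N)$ and $\sigma_{--}(N)$ and apply Theorem~\ref{t:sset_main} (together with Lemma~\ref{Vladimir}) to each piece, and for the ``only if'' direction you read off \eqref{Coni}--\eqref{add-str} from the fact that $N|\calH_\pm$ is normal in a Hilbert space. The paper does exactly this, only more tersely; in particular it cites Lemma~\ref{l:compact} for the openness of $\sigma_{\pm\pm}(N)$ in $\sigma(N)$ (which is cleaner than your ad hoc separation argument) and then jumps straight to $J := Q_+ - Q_-$ being a fundamental symmetry.

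One point deserves a correction. In the ``only if'' direction you invoke Rosenblum's Corollary to obtain $N^+$-invariance of $\calH_\pm$, but your sketched intertwining relation uses $N^+|\calH_\pm$, which is precisely what you are trying to define. The detour is in any case unnecessary: since \emph{both} $\calH_+$ and $\calH_-$ are $N$-invariant, the fundamental projections $P_\pm$ commute with $N$; being $\product$-selfadjoint, they then automatically commute with $N^+$. So the disjoint-spectra hypothesis plays no role in establishing $N^+$-invariance (it is only needed for the definition of strong stability itself). With this simplification the ``only if'' direction becomes as short as in the paper.
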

\begin{proof}
Let $N$  be strongly stable. Then \eqref{Coni} follows and \eqref{Coni2}
and the growth condition follow from the fact that $\Im N|\calH_\pm$
and $\Re N|\calH_\pm$ are selfadjoint operators in the Hilbert spaces
$(\calH_+,\product)$ and $(\calH_-,-\product)$, respectively.

For the converse observe that the sets $\sigma_{++}(N)$ and $\sigma_{--}(N)$ are open in $\sigma(N)$, see Lemma \ref{l:compact}. Therefore, $\sigma_{++}(N)$ and $\sigma_{--}(N)$ are spectral sets. Let $Q_+$ and $Q_-$ be the spectral projections corresponding to these sets, respectively. Then, since $Q_+Q_- = 0$, due to Theorem \ref{t:sset_main} the operator $J := Q_+ - Q_-$ is a fundamental symmetry in $(\calH,\product)$ with the desired properties.

In order to show the last statement of Theorem \ref{t:Nuu}, we denote by $N^*$ the adjoint of $N$ with respect to the Hilbert space inner product $[J\cdot,\cdot]$. Then, from $N^* = N^+$ it follows that $N$ is a normal operator in $(\calH,[J\cdot,\cdot])$.
\end{proof}

The following theorem shows that \eqref{Coni2} and \eqref{add-str}
in Theorem \ref{t:Nuu} can be replaced by the condition that $\Re N$ and $\Im N$ have real spectra and that $N$ is similar to a normal operator in a Hilbert space.

\begin{theorem}\label{sim-norm}
Assume that the normal operator $N$ in the Krein space $(\calH,\product)$ is similar to a normal operator in a Hilbert space and that $\sigma(\Re N)\subset\R$ and $\sigma(\Im N)\subset\R$. Then $\Re N$ and $\Im N$ are similar to selfadjoint operators in a Hilbert space. In particular, their resolvent growths are of first order.
\end{theorem}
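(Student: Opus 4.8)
The plan is to reduce everything to showing that two adjoints of $N$ coincide. By the very definition of being similar to a normal operator in a Hilbert space there is a Hilbert inner product $\lk\cdot,\cdot\rk$ on $\calH$, inducing the topology of $(\calH,\product)$, with respect to which $N$ is normal; write $N^{*}$ for the $\lk\cdot,\cdot\rk$-adjoint, so $NN^{*}=N^{*}N$, and let $G\in L(\calH)$ be the $\lk\cdot,\cdot\rk$-selfadjoint boundedly invertible Gram operator with $[x,y]=\lk Gx,y\rk$, so that the Krein adjoint is $N^{+}=G^{-1}N^{*}G$. First I would form the $\lk\cdot,\cdot\rk$-real and imaginary parts $A_{0}:=\tfrac12(N+N^{*})$ and $B_{0}:=\tfrac1{2i}(N-N^{*})$: these are $\lk\cdot,\cdot\rk$-selfadjoint, they commute, and $N=A_{0}+iB_{0}$, so each of them is similar to a selfadjoint operator and has a resolvent of growth order $1$. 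Hence it suffices to prove $\Re N=A_{0}$ and $\Im N=B_{0}$, equivalently $N^{+}=N^{*}$, equivalently $D:=\tfrac12(N^{*}-N^{+})=0$.

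The second step is that $D$ is quasinilpotent. Since $N$ commutes with $N^{*}$ (normality in the Hilbert space) and with $N^{+}$ ($\product$-normality), $D$ commutes with $N$; because $N$ is normal in $(\calH,\lk\cdot,\cdot\rk)$, any operator commuting with $N$ also commutes with $N^{*}$, so $D$ commutes with $N^{*}$, hence with $A_{0}$, $B_{0}$, and therefore with $\Re N=A_{0}-D$ and $\Im N=B_{0}-iD$. Now $D=A_{0}-\Re N$ is a difference of commuting operators both having real spectrum, so the inclusion in Lemma \ref{l:polynom} gives $\sigma(D)\subset\{\alpha-\beta:\alpha\in\sigma(A_{0}),\,\beta\in\sigma(\Re N)\}\subset\R$; and $D=-i(B_{0}-\Im N)$ gives likewise $\sigma(D)\subset i\R$. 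Thus $\sigma(D)=\{0\}$.

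The main obstacle is to upgrade this to $D=0$, which amounts to showing that $G$ commutes with $N$. Since $D$ commutes with $N$ and $N^{*}$, it commutes with every projection of the $\lk\cdot,\cdot\rk$-spectral measure of $N$; the same is true of $GNG^{-1}$, which is the $\lk\cdot,\cdot\rk$-adjoint of $N^{+}$ and hence (applying twice that commuting with a normal operator entails commuting with its adjoint, first for $N^{*}$, then for $N$) lies in the commutant of $N$ and of $N^{*}$. Cutting $\calH$ down along a Borel partition of $\sigma(N)$ into spectral subspaces of arbitrarily small diameter therefore simultaneously reduces $N$, $N^{*}$, $N^{+}$, $D$ and $GNG^{-1}$, and on each piece $N$ differs from a scalar by as little as one likes, while Rosenblum's corollary (Lemma \ref{l:rosenblum}) controls the interaction between pieces with disjoint spectra. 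The hypotheses $\sigma(\Re N)\subset\R$, $\sigma(\Im N)\subset\R$ then enter through a sign computation: in the model case of a $2\times2$ normal $N=\operatorname{diag}(\la_{1},\la_{2})$ with $\la_{1}\neq\la_{2}$ and Gram matrix $\smallmat{a}{b}{\bar b}{d}$ satisfying $ad-|b|^{2}<0$, one computes $\operatorname{tr}(\Im N)=0$ and $\det(\Im N)=\dfrac{-(\la_{2}-\la_{1})^{2}|b|^{2}}{4(ad-|b|^{2})}>0$ whenever $b\neq0$, so $\Im N$ has non-real spectrum unless $b=0$; the general form of this forces every off-diagonal block of $G$ relative to the spectral measure of $N$ to vanish, i.e.\ $G$ to commute with $N$. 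Then $N^{+}=G^{-1}N^{*}G=N^{*}$ and $D=0$. I expect this step — that the Gram operator commutes with $N$ — to be the technical heart of the proof.

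Finally, with $D=0$ we get $\Re N=A_{0}$ and $\Im N=B_{0}$, which are selfadjoint in the Hilbert space $(\calH,\lk\cdot,\cdot\rk)$ and hence similar to selfadjoint operators in a Hilbert space. For the resolvent growth, the selfadjoint operator $A_{0}$ satisfies $\|(A_{0}-\la)^{-1}\|\le|\Im\la|^{-1}$ in the operator norm associated with $\lk\cdot,\cdot\rk$, and since this norm is equivalent to the given one we obtain $\|(\Re N-\la)^{-1}\|\le c\,|\Im\la|^{-1}$ for some $c>0$; the same applies to $\Im N=B_{0}$. Hence the resolvent growths of $\Re N$ and $\Im N$ are of first order.
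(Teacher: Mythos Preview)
Your setup and the observation that $D:=\tfrac12(N^{*}-N^{+})$ is quasinilpotent are correct and genuinely useful. The gap is precisely where you flag it: the passage from $\sigma(D)=\{0\}$ to $D=0$ is not carried out. The $2\times 2$ ``model case'' is neither a proof nor a template for one --- your claim $\operatorname{tr}(\Im N)=0$ already fails unless $\la_{1}+\la_{2}\in\R$, and in any case the $\product$-normality of a diagonal $N$ with distinct eigenvalues forces $N^{+}$ to be diagonal before any spectrum hypothesis is used, so the computation does not isolate the role of $\sigma(\Re N),\sigma(\Im N)\subset\R$. What you are actually missing is that your own ingredients already finish the job without any finite-dimensional heuristic: since $D$ commutes with $N,N^{*}$ and $\sigma(D)=\{0\}$, Lemmas~\ref{l:polynom} and~\ref{l:invariant_spec} give $\sigma(N^{+}|E(Q)\calH)\subset\ol{Q}^{*}$ for every Borel $Q$; the intertwining relation $GN^{+}=N^{*}G$ then shows that $E(Q_{2})GE(Q_{1})$ intertwines operators with spectra in $\ol{Q_{1}}^{*}$ and $\ol{Q_{2}}^{*}$, so Rosenblum's corollary kills it whenever $\ol{Q_{1}}\cap\ol{Q_{2}}=\emptyset$. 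A routine limiting argument then gives $GE(Q)=E(Q)G$ for all closed (hence all Borel) $Q$, whence $GN=NG$ and $N^{+}=N^{*}$.

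The paper reaches the same conclusion $GE(Q)=E(Q)G$ by a different route, without introducing $D$ or its quasinilpotency. It observes that $E_{+}(Q):=G^{-1}E(Q^{*})G=E(Q^{*})^{+}$ is a countably additive resolution of the identity for $N^{+}$ in the sense of Dunford, so that $E_{+}(Q)\calH$ is the maximal spectral subspace of $N^{+}$ corresponding to $Q$. Using Lemmas~\ref{l:polynom} and~\ref{l:invariant_spec} with the real-spectrum hypotheses directly, it shows $\sigma(N^{+}|E(Q^{*})\calH)\subset Q$ for compact rectangles $Q$, hence $E(Q^{*})\calH\subset E_{+}(Q)\calH$, i.e.\ $E(Q)^{+}E(Q)=E(Q)$; thus every $E(Q)$ is $\product$-selfadjoint and commutes with $G$. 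Your route (once completed as above) trades the spectral-operator machinery for the quasinilpotency of $D$ plus Rosenblum; both are short, but yours currently stops at the decisive step.
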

\begin{proof}
Let $\mathcal B$ denote the set of all Borel-measurable subsets of $\mathbb C$ and set $Q^* := \{\ol\la : \la\in Q\}$ for $Q\in\mathcal B$. Moreover, let $\hproduct$ be a Hilbert space scalar product on $\calH$ with respect to which $N$ is normal, let $G\in L(\calH)$ such that $\product = (G\cdot,\cdot)$ and let $E$ be the spectral measure of the normal operator $N$ in $(\calH, (G\cdot,\cdot))$. Then $E_*$, defined by $E_*(Q) := E(Q^*)$, $Q\in\mathcal B$, is the spectral measure of $N^*$. It follows from the properties of $E_*$ that the function $E_+$ given by $E_+(Q) = G^{-1}E_*(Q)G = E_*(Q)^+ = E(Q^*)^+$, $Q\in\mathcal B$, is a countably additive resolution of the identity for $N^+ = G^{-1}N^*G$ (where we here now use the notions of \cite[Section XV.2]{DS}), that is, $N^+$ is a spectral operator in the sense of Dunford \cite{DS}.

Note that for any compact rectangle $Q\subset\C$ of the type $Q=[a,b]\times [c,d]$ the projection $E(Q)$ (and therefore the projection $E_*(Q)$) commutes with any operator that commutes with $N$, so the operators $\Re (N|{E_*(Q)\calH})$ and $\Im (N|{E_*(Q)\calH})$ have real spectra (cf.\ Lemma \ref{l:invariant_spec}). From Lemma \ref{l:polynom} we conclude that $\sigma(N^+|E_*(Q)\calH)\,\subset\,Q$. Since $E_+(Q)\calH$ is the maximal spectral subspace of $N^+$ corresponding to $Q$ (see, e.g., \cite[Example 2.1.6(ii)]{CF}), we have $E_*(Q)\calH\subset E_+(Q)\calH$ and hence
$$
E(Q^*)^+E(Q^*) = E_+(Q)E_*(Q) = E_*(Q) = E(Q^*).
$$
Therefore, for all compact rectangles $Q\subset\C$ the projection $E(Q)$ is selfadjoint in the Krein space $(\calH,\product)$. Since the system of compact rectangles in $\C$ is stable with respect to intersections and generates $\mathcal B$, it follows that $E(Q) = E(Q)^+$ for all $Q\in\mathcal B$.  This implies that for all $Q\in\mathcal B$ we have $GE(Q) = GE(Q)^+ = GG^{-1}E(Q)G = E(Q)G$ and thus $GN = NG$. Consequently, $N^+ = G^{-1}N^*G = N^*$, which implies the assertion.
\end{proof}

\begin{remark}
Theorem \ref{sim-norm} shows that the  growth of the resolvent of $\Im N $ and $\Re N $ under the conditions of Theorem \ref{t:Nuu} are of first order. 
However in the formulation of Theorem \ref{t:Nuu} we choose the
(formally) weaker Condition \eqref{add-str}.
\end{remark}

\section{Quadratic operator pencils with normal coefficients}
In this section we apply our results to operator pencils. A standard
description of damped small oscillations of a continuum or of  small oscillations of a
pipe, carrying  steady-state fluid of ideal incompressible fluid, is done via
an equation of the form
\begin{equation}\label{erstes}
T \ddot{z} +  R \dot{z} +Vz =0,
\end{equation}
where $z$ is a function with values in a Hilbert space and $V$ and $R$
are (in general) unbounded operators. As a reference (especially for
 non-selfadjoint coefficients) we mention here only
\cite{shk94,JT09} and \cite[Chapters 6.4 and 6.5]{KK01}.

The classical approach (see \cite{KL1,KL2}) to such kind of problems is,
under some additional assumptions  ($V$ uniformly positive and
the closures of the operators $V^{-1/2}TV^{-1/2}$ and $V^{-1/2}RV^{-1/2}$
are bounded) to transform the equation in \eqref{erstes}
via $u=V^{1/2}z$ into
\begin{equation}\label{zweites}
 E\ddot{u} +  F \dot{u} +u =0,
\end{equation}
with bounded operators $E$ and $F$.
If one is interested in finding solutions of the form
\begin{equation*}
u(t) = e^{t\lambda^{-1}} \phi_0,
\end{equation*}
with a constant vector $\phi_0$, then \eqref{zweites} can be written
(after multiplication by $\lambda^2$) as
\begin{equation}\label{drittes}
 (\lambda^2 I + \lambda E + F)  \phi_0 =0.
\end{equation}
In the sequel, we will investigate quadratic pencils of the form \eqref{drittes}
with $E = AC$ and $F = C^2$, where
\begin{equation}\label{Assumptions1}
C\mbox{ is a bounded normal operator in a Hilbert space } H
\end{equation}
and
\begin{equation}\label{Assumptions2}
A\mbox{ is a bounded selfadjoint operator in } H \mbox{ which commutes with } C.
\end{equation}
That is, we investigate the operator pencil $L$,
\begin{equation}\label{viertes}
L(\lambda) := \lambda^2 I + \lambda AC + C^2.
\end{equation}
As usual, a value $\lambda$ for which the equation $L(\lambda)\phi=0$
has a solution $\phi\ne 0$ is called an {\it eigenvalue of the operator pencil $L$}
and the {\it spectrum $\sigma(L)$ of $L$} is the set of all complex numbers
$\lambda$ for which the operator $L(\lambda)$ is not boundedly invertible.
In many cases it turns out (see, e.g., \cite{KL1,KL2}) that a successful
investigation of the spectral properties of $L$ is achieved by studying
the operator roots $Z$ of the quadratic operator equation
\begin{equation}\label{quadraticoperatorequation}
Z^2 + ACZ + C^2=0.
\end{equation}
If there exists a bounded operator $Z_1$ which is an operator root,
i.e., a solution of \eqref{quadraticoperatorequation}, then any eigenvalue
(eigenvector) of $Z_1$ is also an eigenvalue
(eigenvector, respectively)  of the operator pencil $L$. Moreover
$\partial \sigma(Z_1) \subset \sigma(L)$ (see \cite[Lemma 22.10]{Markus})
and the operator pencils $L$ decomposes into linear factors
\begin{equation*}\label{quadraticdecomposes}
L(\lambda) = (\lambda I - \widehat Z_1)  (\lambda I -  Z_1),
\end{equation*}
where  $\widehat Z_1 = -AC-Z_1$.

The following theorem on the existence of an operator root of \eqref{quadraticoperatorequation} shows how our previous results
can be applied.
\begin{theorem}
Assume that the coefficients $A$ and $C$
 of the operator pencil $L(\lambda)$ in \eqref{viertes}
satisfy \eqref{Assumptions1} and \eqref{Assumptions2}.
Define on the Hilbert space $\mathcal H:= H\times H$ an inner product by
\begin{equation} \label{Krein}
\left[ \left(\begin{smallmatrix} x_{1}\\
y_{1}\end{smallmatrix}\right),\left(\begin{smallmatrix} x_{2}\\
y_{2}\end{smallmatrix}\right)\right] := ( x_{1},x_{2})-
( y_{1},y_{2} ) \quad \mbox{for }\left(\begin{smallmatrix} x_{1}\\
y_{1}\end{smallmatrix}\right),\left(\begin{smallmatrix} x_{2}\\
y_{2}\end{smallmatrix}\right) \in  H\times H,
\end{equation}
where $\Skdef$ denotes the Hilbert space scalar product in $H$.
Then the operator matrix $\mathcal A$
\begin{equation*}\label{begleit}
 \mathcal A = \left[\begin{array}{cc} 0 & C\\ -C & -AC
\end{array}\right].
\end{equation*}
is a normal operator in the Krein space $(H\times H, \Skindef )$.
If the operator $\calA$ satisfies the conditions in Theorem \ref{t:Nuu},
then equation \eqref{quadraticoperatorequation} has an operator root.
\end{theorem}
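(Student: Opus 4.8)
The plan is to prove the two assertions separately. That $\mathcal A$ is normal in the Krein space $(H\times H,\Skindef)$ I would check by a direct computation: writing $\Skindef=(G\cdot\,,\cdot)$ with Gram operator $G=\smallmat{I}{0}{0}{-I}$ and using $A=A^*$, the Hilbert-space adjoint is $\mathcal A^*=\smallmat{0}{-C^*}{C^*}{-C^*A}$, so the Krein-space adjoint is $\mathcal A^+=G\mathcal A^*G=\smallmat{0}{C^*}{-C^*}{-C^*A}$. Multiplying out $\mathcal A\mathcal A^+$ and $\mathcal A^+\mathcal A$ entrywise and using that $C$ is normal ($CC^*=C^*C$), that $A$ commutes with $C$, and hence — by the Fuglede--Putnam theorem — also with $C^*$, one sees that all four entries of the two products agree; thus $\mathcal A\mathcal A^+=\mathcal A^+\mathcal A$. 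This part is routine.

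For the second assertion the key is to read off an operator root from a suitable $\mathcal A$-invariant subspace. Suppose $\mathcal L=\{(x,Kx):x\in H\}$ is the graph of some $K\in L(H)$ and $\mathcal L$ is $\mathcal A$-invariant. Since $\mathcal A\smallvek{x}{Kx}=\smallvek{CKx}{-Cx-ACKx}$, invariance of $\mathcal L$ means exactly that $-Cx-ACKx=K(CKx)$ for all $x\in H$, i.e.\ the operator identity $KCK+ACK+C=0$ holds. Multiplying this identity on the \emph{left} by $C$ and using $CA=AC$ (so $CAC=AC^2$) yields $(CK)^2+AC(CK)+C^2=0$; that is, $Z:=CK$ is a (bounded) solution of \eqref{quadraticoperatorequation}. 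So it suffices to exhibit a single $\mathcal A$-invariant subspace which is a graph over the first coordinate.

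This is where Theorem \ref{t:Nuu} is used. Under its hypotheses on $\mathcal A$ — namely $\sigma(\mathcal A)=\sigma_{++}(\mathcal A)\cup\sigma_{--}(\mathcal A)$, $\sigma(\Im\mathcal A)\subset\R$ (or $\sigma(\Re\mathcal A)\subset\R$), and finite resolvent growth of $\Im\mathcal A$ (resp.\ $\Re\mathcal A$) — the operator $\mathcal A$ is strongly stable, so there is a fundamental decomposition $\calH=\calH_+\dotplus\calH_-$ in which both $\calH_+$ and $\calH_-$ are $\mathcal A$-invariant. The positive part $\calH_+$ is a maximal uniformly positive subspace of $(H\times H,\Skindef)$; since here the Gram operator is $\smallmat{I}{0}{0}{-I}$, positivity forces the first-coordinate projection $P_1$ to be injective on $\calH_+$, and maximality forces $P_1\calH_+=H$, so by the angular operator representation of maximal definite subspaces (see, e.g., \cite{AI}) one has $\calH_+=\{(x,Kx):x\in H\}$ with $K\in L(H)$, $\|K\|<1$. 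Applying the reduction of the previous paragraph to $\mathcal L=\calH_+$, the operator $Z:=CK$ is an operator root of \eqref{quadraticoperatorequation}.

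I expect the main — though still mild — obstacle to be the identification of $\calH_+$ with a graph over the first coordinate carrying a bounded angular operator; this relies on the structure of fundamental decompositions of $(H\times H,\Skindef)$ (in particular on maximality of $\calH_+$ giving surjectivity of $P_1$ onto $H$) rather than on anything about $\mathcal A$. The algebraic passage from $\mathcal A$-invariance to \eqref{quadraticoperatorequation} is then only the computation $C\,(KCK+ACK+C)=(CK)^2+AC(CK)+C^2$, where the single point requiring care is that $K$ need not commute with $C$, so it is left (not right) multiplication by $C$ that produces the desired equation.
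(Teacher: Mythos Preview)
Your argument is correct and follows essentially the same route as the paper: compute $\mathcal A^+$, verify normality, invoke Theorem~\ref{t:Nuu} to obtain an $\mathcal A$-invariant maximal uniformly positive subspace $\calH_+$, write it as the graph of an angular operator $K$ with $\|K\|<1$, and read off an operator root from the invariance relation $KCK+ACK+C=0$.

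Two small remarks. First, the appeal to Fuglede--Putnam is unnecessary: since $A=A^*$, taking adjoints in $AC=CA$ already gives $C^*A=AC^*$. Second, your closing comment that ``it is left (not right) multiplication by $C$ that produces the desired equation'' is not quite accurate. Right multiplication of $KCK+ACK+C=0$ by $C$ gives $(KC)^2+AC(KC)+C^2=0$ directly, with no commutation needed; this is in fact what the paper does, obtaining the root $KC$ rather than your $CK$. Both are valid roots of \eqref{quadraticoperatorequation}, so your version is fine---but right multiplication is, if anything, the cleaner choice here.
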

\begin{proof}
Obviously, $\mathcal H= H\times H$ with inner product \eqref{Krein}
is a Krein space and the adjoint of $\mathcal A$ with respect to
 $\product$ is given by
\begin{equation*}\label{defj}
 \mathcal A^+= \left[\begin{array}{cc} 0 & C^*\\ -C^* & -AC^*
\end{array}\right].
\end{equation*}
From this, we easily conclude $\mathcal A \mathcal A^+=\mathcal A^+ \mathcal A$.
If the operator $\calA$ satisfies the conditions in Theorem \ref{t:Nuu},
 then $\calA$ is a strongly stable normal operator in
the Krein space $(H \times H, \product )$.
Hence,  there exists a fundamental decomposition \eqref{funddecomp}
such that $\calH_+$ and $\calH_-$ are invariant subspaces with respect to $\mathcal A$. Let  $K:H\to H$ with
$\|K\| <1$ be the corresponding angular operator, see, e.g., \cite[Chapter 1, \S8]{AI}, such that
\begin{equation*}\label{AngularOp}
\calH_+ =\left\{\left(\begin{array}{c}
x_+ \\ Kx_+
\end{array}  \right) : x_+ \in H\right\}.
\end{equation*}
 Now, $\mathcal A \calH_+\subset\calH_+$
implies that for every $x_+ \in H$ there exists $y_+ \in H$ with
\begin{equation*}\label{AngularOp2}
\left(\begin{array}{c}
CKx_+ \\ -Cx_+  -ACKx_+
\end{array}  \right) = \left(\begin{array}{c}
y_+ \\ Ky_+
\end{array}  \right)
\end{equation*}
and we obtain
$$
-C-ACK= KCK.
$$
Multiplication by $C$ from the right gives
$$
(KC)^2+ AC(KC) + C^2 =0,
$$
and the operator $KC$ is an operator root of  \eqref{quadraticoperatorequation}.
\end{proof}

\begin{remark}
If the operator pencil $\hat L(\la) := \la^2 + \la AD + D^2$ with $D = \frac 1 2(C+C^*)$ is hyperbolic, i.e.
\begin{equation}\label{e:hyp}
(ADx,x)^2\,\ge\,4(D^2x,x)\quad\text{for all }x\in H,\,\|x\|=1,
\end{equation}
and if $0$ is not an eigenvalue of $D$,
then equation \eqref{quadraticoperatorequation} has an operator root $Z_1$ which commutes with $\hat Z_1$. To see this, we note that \eqref{e:hyp} implies $(A^2x,x)\|Dx\|^2 = \|Ax\|^2\|Dx\|^2\ge(Ax,Dx)^2\ge 4\|Dx\|^2$ and hence $A^2 - 4\ge 0$. Let $W := (A^2 - 4)^{1/2}$. Now, a simple computation shows that both
$$
Z_1 := \frac1 2(W-A)C\quad\text{ and }\quad Z_2 := -\frac 1 2(W+A)C
$$
are operator roots of \eqref{quadraticoperatorequation}, $Z_1Z_2 = Z_2Z_1$ and $L(\la) = (\la - Z_1)(\la - Z_2)$.
\end{remark}


\section*{Acknowledgements}
Vladimir Strauss and Friedrich Philipp gratefully acknowledge support by DFG, Grants TR 903/9-1 and  TR 903/4-1, respectively.

\section*{Contact information}
Friedrich Philipp: Institut f\"ur Mathematik, MA 8-1, Technische Universit\"at Berlin, Stra\ss e des 17.\ Juni 136, 10623 Berlin, Germany, fmphilipp@gmail.com

\vspace{0.4cm}\noindent
Vladimir Strauss: Universidad Sim\'on Bol\'ivar, Departamento de Matem\'aticas Puras y Aplicadas, Apartado 89.000, Caracas 1080-A, Venezuela, str@usb.ve

\vspace{0.4cm}\noindent
Carsten Trunk: Institut f\"ur Mathematik, Technische Universit\"at Ilmenau, Postfach 10 05 65, 98684 Ilmenau, Germany, carsten.trunk@tu-ilmenau.de

\begin{thebibliography}{99}
\bibitem{AI} T.Ya.\ Azizov, I.S.\ Iokhvidov:
             Linear Operators in Spaces with an Indefinite Metric.
             John Wiley \& Sons, Chichester, New York (1989).

\bibitem{ABJT} T.Ya.\ Azizov, J.\ Behrndt, P.\ Jonas and C.\ Trunk,
               Spectral points of type $\pi_+$ and type $\pi_-$ for closed linear relations in Krein spaces,
               J.\ London Math.\ Soc.\ (2) {\bf 83} (2011), 768-–788.


\bibitem{AJ} T.Ya.\ Azizov, P.\ Jonas:
             On compact perturbations of normal operators in a Krein space.
             Ukrainskii Matem.\ Zurnal \textbf{42} (1990), 1299--1306.

\bibitem{AJT} T.Ya.\ Azizov, P.\ Jonas, C.\ Trunk:
              Spectral points of type $\pi_{+}$ and $\pi_{-}$ of selfadjoint
              operators in Krein spaces.
              J.\ Funct.\ Anal.\ \textbf{226} (2005), 114--137.

\bibitem{AS} T.Ya.\ Azizov, V.A.\ Strauss:
             Spectral decompositions for special classes of self-adjoint
             and normal operators on Krein spaces.
             Spectral Theory and Applications,
             Theta Ser.\ Adv.\ Math.\  {\bf 2} (2003), Theta, Bucharest, 45--67.

\bibitem{B98} Ts.\ Bayasgalan:
              On strongly stable normal operators on Krein space.
              Acta Math.\ Hungar.\  {\bf 81} (1998), 275--278.

\bibitem{B99} Ts.\ Bayasgalan:
              Fundamental reducibility of normal operators on Krein space.
              Stud.\ Sci.\ Math.\ Hung.\  {\bf 35} (1999), 147--150

\bibitem{B} J.\ Bognar:
            Indefinite Inner Product Spaces.
            Springer, Berlin (1974).

\bibitem{CF} I.\ Colojoar\u{a}, C.\ Foia\c{s}:
             Theory of Generalized Spectral Operators.
             Gordon and Breach, Science Publishers, Inc., New York (1968).

\bibitem{C}  J.B.\ Conway:
             A course in functional analysis.
             Springer, New York (1985).

\bibitem{DK} Yu.L.\ Daletskii, M.G.\ Krein:
             Stability of solutions of differential equations in Banach space.
             Amer.\ Math.\ Soc.\ (1974) (Translated from Russian).

 \bibitem{DS} N.\ Dunford, J.T.\ Schwartz,
	      Linear operators. Part III:
 	      Spectral operators.   John Wiley \& Sons, New York (1971).

\bibitem{ES} A.Yu.\ Evnin,  V.A.\ Strauss:
             Normal operators in Banach spaces with an Hermitian form.
             In: Collection: Applied problems of mathematical analysis,
             Chelyabinsk, Chelyabinsk Polytechnical Institute
             (1986), 101--105 (Russian).

\bibitem{JT09} B.\ Jacob, C.\ Trunk: Spectrum and analyticity of semigroups arising in elasticity theory and hydromechanics,
               Semigroup Forum {\bf 79} (2009), 79--100.


\bibitem{J}  P.\ Jonas:
             On locally definite operators in Krein spaces.
             In: Spectral Theory and Applications,
             Theta Ser.\ Adv.\ Math.\ {\bf 2} (2003),
             Theta, Bucharest (95--127).

\bibitem{KK01} N.D.\ Kopachevsky, S.G.\ Krein:
Operator approach to linear problems of hydrodynamics.
Volume 1: Self-adjoint Problems for an ideal fluid.
Birkh\"{a}user, Basel, (2001).

\bibitem{KL1}
M.G.\ Krein, H.\ Langer:
On some mathematical principles in the linear theory of damped
    oscillations of continua I.
Integral Equations Operator Theory
{\bf 1} (1978), 364--399.

\bibitem{KL2}
M.G.\ Krein, H.\ Langer:
On some mathematical principles in the linear theory of damped
    oscillations of continua II.
Integral Equations Operator Theory {\bf 1} (1978), 539--566.


\bibitem{LaMM} P.\ Lancaster, A.S.\ Markus, V.I.\ Matsaev:
               Definitizable operators and quasihyperbolic operator polynomials.
               J.\ Funct.\ Anal.\ \textbf{131} (1995), 1--28.

\bibitem{L65} H.\ Langer:
              Spektraltheorie linearer Operatoren in J-R\"{a}umen und einige
              Anwendungen auf die Schar $L(\lambda) = \lambda^2 + \lambda B + C$,
              Habilitationsschrift, Technische Universit\"{a}t Dresden, 1965.

\bibitem{L82} H.\ Langer:
              Spectral functions of definitizable operators in Krein spaces.
              Lect.\ Notes Math.\ {\bf 948} (1982), 1--46.

\bibitem{LMM} H.\ Langer, A.\ Markus, V.\ Matsaev:
              Locally definite operators in indefinite inner product spaces.
              Math.\ Ann.\ \textbf{308} (1997), 405--424.

\bibitem{LS06} H.\ Langer, F.H.\  Szafraniec:
               Bounded normal operators in Pontryagin spaces.
               Oper.\ Theory Adv.\ Appl.\ {\bf 162} (2006), 231--251.

\bibitem{LM} I.\ Lyubich, V.I.\ Matsaev:
             On operators with decomposable spectrum.
             Mat.\ Sbornik \textbf{56} (98) (1962), 433--468 (Russian).
             Engl.\ transl.: AMS Transl.\ (2) \textbf{47} (1965), 89--129.

\bibitem{Markus} A.\ Markus: Introduction to the spectral theory of polynomial
	operator pencils. AMS Transl.\ Math.\ Monographs {\bf 71}, Providence
	Rhode Island (1988).

\bibitem{M} V.\ Mueller:
            Spectral theory of linear operators and spectral systems in Banach algebras.
            Birkh\"auser, Basel (2007).

\bibitem{N} M.A.\ Naimark:
            On commuting unitary operators in spaces with indefinite metric.
            Acta Sci.\ Math.\ (Szeged) \textbf{24} (1963), 177--189.

\bibitem{RR}   H.\ Radjavi and P.\ Rosenthal:
               Invariant Subspaces.
               Dover Publications, Inc., Mineola, New York (2003).

\bibitem{shk94}
A.A.~Shkalikov: Operator pencils arising in elasticity and hydrodynamics: the instability index formula, Recent developments in operator theory and its applications
 (Winnipeg, MB, 1994).  Oper.\ Theory Adv.\ Appl.\ {\bf 87} (1996), 358--385.

\bibitem{CH} C.\ Xiaoman, H.\ Chaocheng:
             Normal operators in $\Pi_\kappa$-spaces.
             Northeast.\ Math.\ J.\ {\bf 1} (1985), 247--252.
\end{thebibliography}
\end{document}